\crefname{hypothesis}{Hypothesis}{Hypotheses}
\title{Local Fourier analysis of multigrid for hybridized and embedded
  discontinuous Galerkin methods\thanks{\funding{SR and HDS gratefully
      acknowledge support from the Natural Sciences and Engineering
      Research Council of Canada through the Discovery Grant program
      (RGPIN-05606-2015 and RGPIN-2019-04155).}}}
\author{Yunhui He\thanks{Department of Applied Mathematics University
    of Waterloo, 200 University Ave W,Waterloo, ON N2L 3G1, Canada
    (\email{yunhui.he@uwaterloo.ca}, \email{srheberg@uwaterloo.ca},
    \email{hdesterck@uwaterloo.ca}).}  \and Sander
  Rhebergen\footnotemark[2] \and Hans De Sterck\footnotemark[2]}
\begin{document}
\maketitle
\begin{abstract}
  In this paper we present a geometric multigrid method with
  Jacobi and Vanka relaxation for hybridized and embedded discontinuous
  Galerkin discretizations of the Laplacian. We present a local
  Fourier analysis (LFA) of the two-grid error-propagation operator
  and show that the multigrid method applied to an embedded
  discontinuous Galerkin (EDG) discretization is almost as efficient
  as when applied to a continuous Galerkin discretization. We
  furthermore show that multigrid applied to an EDG discretization
  outperforms multigrid applied to a hybridized discontinuous Galerkin
  (HDG) discretization. Numerical examples verify our LFA predictions.
\end{abstract}
\begin{keywords}
  Preconditioning,
  embedded and hybridized discontinuous Galerkin methods,
  geometric multigrid,
  local Fourier analysis
\end{keywords}
\begin{AMS}
  65F08, 
  65N30, 
  65N55  
\end{AMS}
\section{Introduction}
\label{sec:intro}

The hybridizable discontinuous Galerkin (HDG) method was introduced in
\cite{Cockburn:2009} with the purpose of reducing the computational cost
of discontinuous Galerkin (DG) methods while retaining the
conservation and stability properties of DG methods. This is achieved
by introducing facet variables and eliminating local (element-wise)
degrees-of-freedom. This static condensation can significantly reduce
the size of the global problem. Indeed, it was shown in
\cite{Kirby:2012,Yakovlev:2016} that the HDG method either outperforms
or demonstrates comparable performance when compared to the CG
method. This is in part also due to the local postprocessing which
allows one to obtain a superconverged solution. However, they mention
that these results hold true \emph{only} when a direct solver is used;
when an iterative solver is used the HDG method falls behind
performance-wise.

The literature on iterative solvers and preconditioners for CG
discretizations is vast. In contrast, there are only few studies on
solvers for HDG and hybridized discretizations. We mention, for
example, \cite{Gopalakrishnan:2009} which presents a convergence
analysis of multigrid for a hybridized Raviart--Thomas discretization,
\cite{Cockburn:2014} which analyzes an auxiliary space multigrid
method for HDG discretizations of elliptic partial differential
equations, \cite{Fabien:2019} which considers parallel geometric
multigrid for HDG methods, \cite{Wildey:2019} which presents a unified
geometric multigrid method for hybridized finite element methods, and
\cite{Dobrev:2019} which considers the solution of hybridized systems
by algebraic multigrid. Furthermore, a performance comparison of a
variation of the multigrid method proposed in \cite{Cockburn:2014}
applied to CG, HDG and DG discretizations for the Poisson problem is
conducted in \cite{Kronbichler:2018}. They show that high-order
continuous finite elements give the best time to solution for smooth
solutions followed by matrix-free solvers for DG and HDG (using
algebraic multigrid).

An alternative to HDG methods is the embedded discontinuous Galerkin
(EDG) method which was introduced and analyzed in
\cite{Cockburn:2009b,Guzey:2007,Wells:2011}. The difference between an
EDG and HDG method is that the facet variables in an EDG method are
continuous between facets; in the HDG method they are discontinuous
between facets. For the EDG method this means that after static
condensation it has the same number of global degrees-of-freedom
(DOFs) as a continuous Galerkin (CG) method and less DOFs than an HDG
method on a given mesh. The EDG method, however, does not have the
superconvergent properties of the postprocessed HDG solution and has
therefore been studied less in the literature. However, we will see
that the algebraic structure of the linear system resulting from an
EDG method is better suited to fast iterative solvers than the linear
system resulting from an HDG discretization. Indeed, this paper is
motivated by the observation that multigrid methods applied to EDG
discretizations of the Laplacian outperform multigrid methods applied
to HDG discretizations of the Laplacian. This was observed, for
example, in the context of the Stokes problem in \cite{Rhebergen:2020}
using the block-preconditioners developed in \cite{Rhebergen:2018b}.

The first goal of this paper is to present a geometric multigrid
method for the hybridized and embedded discontinuous Galerkin
discretizations of the Laplacian. The challenge in designing efficient
multigrid methods for these discretizations is twofold. Firstly,
since the facet function spaces do not form a nested hierarchy on
refined grids, the design of intergrid transfer operators is not
trivial. We therefore use the recently introduced Dirichlet-to-Neumann
(DtN) maps proposed in \cite{Wildey:2019} for hybridized finite
element methods. The second challenge is the design of an efficient
relaxation scheme. The smoothers used in multigrid methods applied to
discontinuous Galerkin (DG) discretizations of an elliptic PDE are
usually the classical (block) Jacobi and (block) Gauss--Seidel
smoother (see for example \cite{Gopalakrishnan:2003, Hemker:2003,
  Hemker:2004, Raalte:2005}). We, however, use additive Vanka-type
relaxation \cite{farrell2019local,Frommer:1999Vanka,Saad2003book}
since Vanka-type relaxation is more suitable for parallel
computing on general meshes than (block) Gauss--Seidel and, as we will
show, results in a multigrid method that requires less iterations than when
using Jacobi relaxation. Using Vanka-type relaxation requires the definition
of Vanka-patches. We will study two different types of patches,
namely, element-wise and vertex-wise patches. We remark that
Vanka-type relaxation has been studied also in the context of
discontinuous Galerkin methods for the Stokes problem in
\cite{Adler:2017}.

The second and main goal of this paper is to present a two-dimensional
local Fourier analysis (LFA) of the geometric two-grid error
propagation operator of HDG and EDG discretizations of the
Laplacian. Local Fourier analysis is used to predict the efficiency of
multigrid methods. We will show that the performance of multigrid
applied to an embedded discontinuous Galerkin discretization is
similarly efficient and scalable to when multigrid is applied to a
continuous Galerkin discretization. We furthermore show that multigrid
applied to EDG outperforms multigrid applied to HDG, confirming what
was previously observed but not explained in \cite{Rhebergen:2020}.
We remark that local Fourier analysis has previously been used to
study the performance of multigrid applied to discontinuous Galerkin
discretizations (see for example \cite{Fidkowski:2005, Hemker:2003,
  Hemker:2004, Vegt:2012a, Vegt:2012b, Raalte:2005}). One-dimensional
LFA has been used also to study a multilevel method for the HDG
discretization of the Helmholtz equation \cite{Chen:2014}. However, to
the best of our knowledge, a two-dimensional LFA has not been applied
in the context of hybridizable and embedded discontinuous Galerkin
discretizations, and the performance of geometric multigrid for these
types of methods has not been analyzed before.

The remainder of this work is organized as follows. In
\cref{sec:discretization} we discuss the hybridized and embedded
discontinuous Galerkin discretization of the Poisson problem. We
present geometric multigrid with additive Vanka relaxation for the
hybridized and embedded trace system in \cref{sec:two-grid}. A local
Fourier analysis of the corresponding two-grid method is presented in
\cref{sec:LFA-tg}. Our theory is applied and verified by numerical
examples in \cref{sec:numer} and we draw conclusions in
\cref{sec:conclu}.

\section{The EDG and HDG methods}
\label{sec:discretization}

In this section we present EDG and HDG methods for the Poisson
problem:
\begin{subequations}
  \begin{align}
    \label{eq:laplace}
    -\Delta u &= f && \text{in } \Omega,
    \\
    u &= 0 && \text{on } \partial\Omega,
  \end{align}
  \label{eq:poisson}
\end{subequations}
where $\Omega \subset \mathbb{R}^2$ is a bounded polygonal domain with
boundary $\partial\Omega$, $f : \Omega \to \mathbb{R}$ is a given
source term, and $u:\Omega \to \mathbb{R}$ is the unknown.

\subsection{The discretization}

We will discretize \cref{eq:poisson} by an EDG and an HDG method. For
this, denote by $\mathcal{T}_h := \cbr[0]{K}$ a tesselation of
$\Omega$ into non-overlapping quadrilateral elements $K$. We will denote the
diameter of an element $K$ by $h_K$ and the maximum diameter over all
elements $K \in \mathcal{T}_h$ by $h$. The boundary of an element is
denoted by $\partial K$ and the outward unit normal vector on
$\partial K$ is denoted by $\boldsymbol{n}$. An interior face
$F := \partial K^+ \cap \partial K^-$ is shared by two adjacent
elements $K^+$ and $K^-$ while a boundary face is a part of $\partial K$
that lies on $\partial\Omega$. We denote the set of all faces by
$\mathcal{F}_h := \cbr[0]{F}$ and the union of all faces by
$\Gamma^0_h$.

Denote by $Q^k(K)$ and $Q^k(F)$ the spaces of tensor product
polynomials of degree $k$ on, respectively, element $K$ and face
$F$. We consider the following discontinuous finite element function
spaces:
\begin{align}
  V_h       &:= \cbr[0]{v_h \in L^2(\Omega)\, :\, v_h \in Q^k(K)\ \forall K \in \mathcal{T}_h},
  \\
  \bar{V}_h &:= \cbr[0]{\bar{v}_h \in L^2(\Gamma^0_h)\, :\, \bar{v}_h \in Q^k(F)\
              \forall F \in \mathcal{F}_h,\ \bar{v}_h = 0 \text{ on } \partial\Omega}.
\end{align}
For the EDG and HDG methods we then define
\begin{equation}
  \label{eq:edghdgspace}
  \boldsymbol{X}_h := V_h \times \bar{X}_h \quad \text{with} \quad
  \bar{X}_h :=
  \begin{cases}
    \bar{V}_h & \text{HDG method},
    \\
    \bar{V}_h \cap C^0(\Gamma^0_h) & \text{EDG method}.
  \end{cases}
\end{equation}
We note that the HDG method uses discontinuous facet function spaces
and that the EDG method uses continuous facet function spaces.

For notational purposes we denote function pairs in $\boldsymbol{X}_h$
by $\boldsymbol{v}_h := (v_h, \bar{v}_h) \in \boldsymbol{X}_h$. For
functions $u, v \in L^2(K)$ we write $(u, v)_K := \int_K uv \dif x$
and define
$(u, v)_{\mathcal{T}_h} := \sum_{K \in \mathcal{T}_h}(u,
v)_K$. Similarly, for functions $u, v \in L^2(E)$ where
$E \subset \mathbb{R}$ we write
$\langle u, v \rangle_E := \int_E uv \dif x$ and
$\langle u, v \rangle_{\partial\mathcal{T}_h} := \sum_{K \in
  \mathcal{T}_h}\langle u, v \rangle_{\partial K}$.

The interior penalty EDG and HDG methods are given by
\cite{Cockburn:2009,Cockburn:2009b,Wells:2011}: find
$\boldsymbol{u}_h \in \boldsymbol{X}_h$ such that
\begin{equation}
  \label{eq:edghdg}
  a_h(\boldsymbol{u}_h, \boldsymbol{v}_h) = (v_h, f)_{\mathcal{T}_h}
  \quad \forall \boldsymbol{v}_h \in \boldsymbol{X}_h,
\end{equation}
where
\begin{multline}
  a_h(\boldsymbol{w}, \boldsymbol{v})
  :=
  (\nabla w, \nabla v)_{\mathcal{T}_h}
  + \langle \alpha h_K^{-1}(w - \bar{w}), v-\bar{v} \rangle_{\partial\mathcal{T}_h}
  \\
  - \langle w-\bar{w}, \nabla v \cdot \boldsymbol{n} \rangle_{\partial\mathcal{T}_h}
  - \langle v-\bar{v}, \nabla w \cdot \boldsymbol{n} \rangle_{\partial\mathcal{T}_h}.
\end{multline}
Here $\alpha$ is a penalty parameter that needs to be chosen
sufficiently large \cite{Wells:2011}.

\subsection{Static condensation}

A feature of the EDG and HDG methods is that local (element)
degrees-of-freedom can be eliminated from the discretization. For
higher-order accurate discretizations this static condensation can
significantly reduce the size of the problem. To obtain the reduced
problem we define the function $v_h^L(\bar{m}_h, s) \in V_h$ such that
its restriction to the element $K$ satisfies: given
$s \in L^2(\Omega)$ and $\bar{m}_h \in \bar{X}_h$,
\begin{multline}
  (\nabla v_h^L, \nabla w_h)_{K}
  + \langle \alpha h_K^{-1}v_h^L, w_h \rangle_{\partial K}
  - \langle v_h^L, \nabla w_h \cdot \boldsymbol{n} \rangle_{\partial K}
  - \langle w_h, \nabla v_h^L \cdot \boldsymbol{n} \rangle_{\partial K}
  \\
  =
  (w_h, s)_{K}
  + \langle \alpha h_K^{-1}\bar{m}_h, w_h \rangle_{\partial K}
  - \langle \bar{m}_h, \nabla w_h \cdot \boldsymbol{n} \rangle_{\partial K},
\end{multline}
for all $w_h \in Q^k(K)$. If $\boldsymbol{u}_h \in \boldsymbol{X}_h$
satisfies \cref{eq:edghdg}, then $u_h = u_h^f + l(\bar{u}_h)$ where
$u_h^f := v_h^L(0, f)$ and $l(\bar{u}_h) :=
v_h^L(\bar{u}_h,0)$. Furthermore, $\bar{u}_h \in \bar{X}_h$ satisfies
 \cite{Cockburn:2009,Rhebergen:2018b}:
\begin{equation}
  \label{eq:statcondens}
  \bar{a}_h(\bar{u}_h, \bar{v}_h) = (l(\bar{v}_h), f)_{\mathcal{T}_h} \quad \forall \bar{v}_h \in \bar{X}_h
\end{equation}
where
\begin{equation}
  \bar{a}_h(\bar{u}_h, \bar{v}_h) := a_h( (l(\bar{u}_h), \bar{u}_h), (l(\bar{v}_h), \bar{v}_h) ).
\end{equation}
We remark that \cref{eq:statcondens} is the EDG or HDG method after
eliminating the element degrees-of-freedom.

It will be useful to consider also the matrix representation of the
EDG and HDG methods. For this, let ${\bf u}_h \in \mathbb{R}^{n_h}$ be the
vector of the discrete solution with respect to the basis for $V_h$
and let ${\bf \bar{u}}_h \in \mathbb{R}^{\bar{n}_h}$ be the vector of
the discrete solution with respect to the basis for $\bar{V}_h$. We
can write \cref{eq:edghdg} as
\begin{equation}
  \label{eq:matsystem}
  \begin{bmatrix}
    A & B^T \\ B & C
  \end{bmatrix}
  \begin{bmatrix}
    {\bf u}_h \\ {\bf \bar{u}}_h
  \end{bmatrix}
  =
  \begin{bmatrix}
    G_1 \\ G_2
  \end{bmatrix},
\end{equation}
where $A$, $B$, $C$ are the matrices obtained from
$a_h((0, \cdot), (0, \cdot))$, $a_h((\cdot, 0), (0, \cdot))$, and
$a_h((0, \cdot), (0, \cdot))$, respectively. Since $A$ is a block
diagonal matrix it is cheap to compute its inverse. Then, using
${\bf u}_h = A^{-1}(G_1 - B^T{\bf \bar{u}}_h)$ we eliminate ${\bf u}_h$
from \cref{eq:matsystem} and find:
\begin{equation}
  \label{eq:matsystemred}
  (-BA^{-1}B^T+C){\bf \bar{u}}_h = G_2 - BA^{-1}G_1
  \qquad \leftrightarrow \qquad K_h{\bf \bar{u}}_h = f_h.
\end{equation}
This trace system is the matrix representation of
\cref{eq:statcondens}. In the remainder of this work we will present
and analyze geometric multigrid with Vanka relaxation for the solution
of \cref{eq:matsystemred}.

\section{Geometric multigrid method}
\label{sec:two-grid}

The geometric multigrid algorithm consists of: (1) applying
pre-relaxation on the fine grid; (2) a coarse-grid correction step in
which the residual is restricted to a coarse grid, a coarse-grid
problem is solved (either exactly or by applying multigrid
recursively), interpolating the resulting solution as an error
correction to the fine grid approximation; and (3) applying
post-relaxation.

In this section we present the different operators in a geometric
multigrid method for the solution of the trace system
\cref{eq:matsystemred}. To set up notation, let $\mathcal{T}_{n,h}$ be
a finite sequence of increasingly coarser meshes with
$n=1,2,3,\hdots,N$. For $1 \le n < m \le N$ we denote the restriction
operator by $R_{n,h}^{m,h}:\mathcal{T}_{n,h} \to \mathcal{T}_{m,h}$, the
prolongation operator by
$P_{m,h}^{n,h}:\mathcal{T}_{m,h} \to \mathcal{T}_{n,h}$, and the
coarse-grid operator by $K_{m,h}$.

\subsection{Relaxation scheme}
\label{subsec:relaxation-scheme}

Many different relaxation methods may be used in multigrid
algorithms. In our analysis we consider additive Vanka type relaxation
(block Jacobi relaxation defined by Vanka patches) and compare its
performance to the classical relaxation iterations of pointwise Jacobi
and Gauss--Seidel. In this section we introduce additive Vanka
relaxation relaxation \cite{Frommer:1999Vanka, Saad2003book} following
the description in \cite{farrell2019local}.

Let $\mathcal{D}$ denote the set of DOFs of ${\bf \bar{u}}_h$ and let
$\mathcal{D}_i$, $i=1, \hdots, J$, be subsets of unknowns with
$\mathcal{D}=\cup_{i=1}^{J} \mathcal{D}_i$. Let $V_i$ be the
restriction operator mapping from vectors over the set of all
unknowns, $\mathcal{D}$, to vectors whose unknowns consist of the DOFs
in $\mathcal{D}_{i}$. Then $K_i=V_iK_hV_i^T$ is the restriction of
$K_h$ to the $i$-th block of DOFs. Moreover, let
$W_i=\text{diag}(w^{i}_1,w^{i}_2,\hdots,w^{i}_{m_i})$ for
$i=1,\hdots,J$ be a diagonal weight matrix for each block $i$, where
$m_i$ is the dimension of $K_i$. Then, for a given approximation
${\bf \bar{u}}^{(j)}_h$, we solve in each Vanka block
$i = 1, \hdots, J$ the linear system
\begin{equation}
  \label{eq:vanka-block}
  K_i \delta_i =V_i(b_h-K_h{\bf \bar{u}}^{(j)}_h),
\end{equation}
and update ${\bf \bar{u}}^{(j)}_h$ according to
\begin{equation}
  \label{eq:vanka-omega}
  {\bf \bar{u}}^{(j+1)}_h ={\bf \bar{u}}^{(j)}_h +\omega \sum_{i=1}^{J}V_i^T W_i \delta_i,
\end{equation}
where $\omega$ is a tunable parameter. For the rest of this paper it
is useful to note that the error-propagation operator of the additive
Vanka relaxation scheme is given by
\begin{equation}
  \label{eq:vanka-smoother}
  S_h = I - \omega M_h^{-1} K_h,
\end{equation}
where
\begin{equation*}
  M_h^{-1} = \sum_{i=1}^{J}V_i^{T}W_i K_i^{-1}V_i.
\end{equation*}

Depending on the discretization method (EDG, HDG, and CG), the sets
$\mathcal{D}_i$ $i=1, \hdots, J$ are chosen differently. However, for
all discretization methods we consider two classes of determining
$\mathcal{D}_i$, namely, via vertex-wise patches and via element-wise
patches. Vertex-wise patches $\mathcal{D}_i$ consist of the DOFs on
the vertex $\boldsymbol{ v}_i$, the DOFs on the interior of all edges
that share vertex $\boldsymbol{v}_i$, and any DOFs on the interiors of
the elements that contains vertex $\boldsymbol{v}_i$. On element-wise
patches $\mathcal{D}_i$ consists of all DOFs on the $i^{\text{th}}$
element and its boundary. As an example we plot the different patches
for CG for $k=2$, EDG for $k=2$, and HDG for $k=1$ in
\cref{fig:Vanka-patches-plot}. Note that these two Vanka-type patches
are applicable also on unstructured meshes.

\begin{figure}[tbp]
  \centering
  \subfloat[Vertex-wise Vanka block. \label{fig:CG-Vanka-patch-plot_a}]
  {\includegraphics[width=0.4\textwidth]{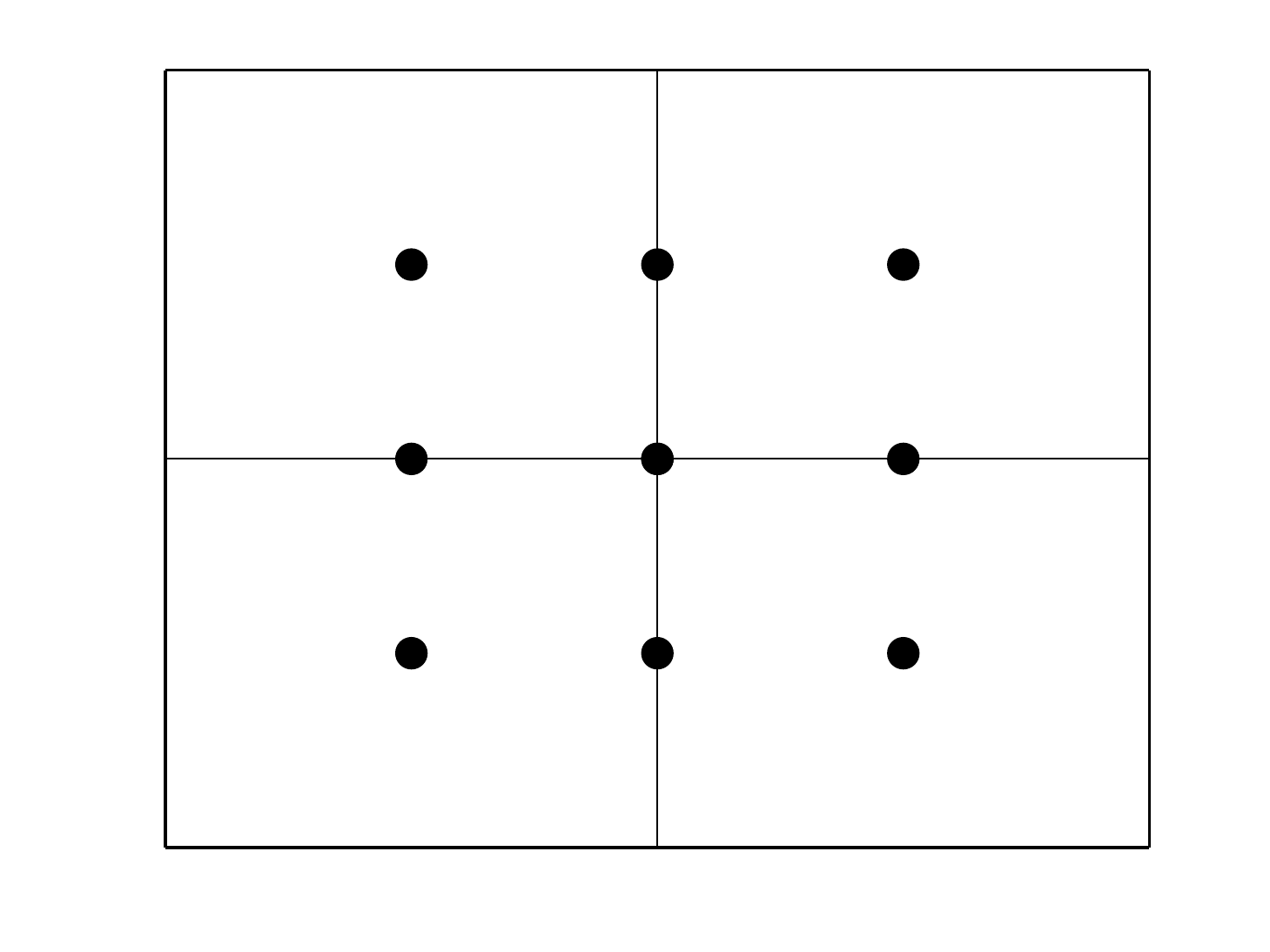}}
  \subfloat[Element-wise Vanka block. \label{fig:CG-Vanka-patch-plot_b}]
  {\includegraphics[width=0.4\textwidth]{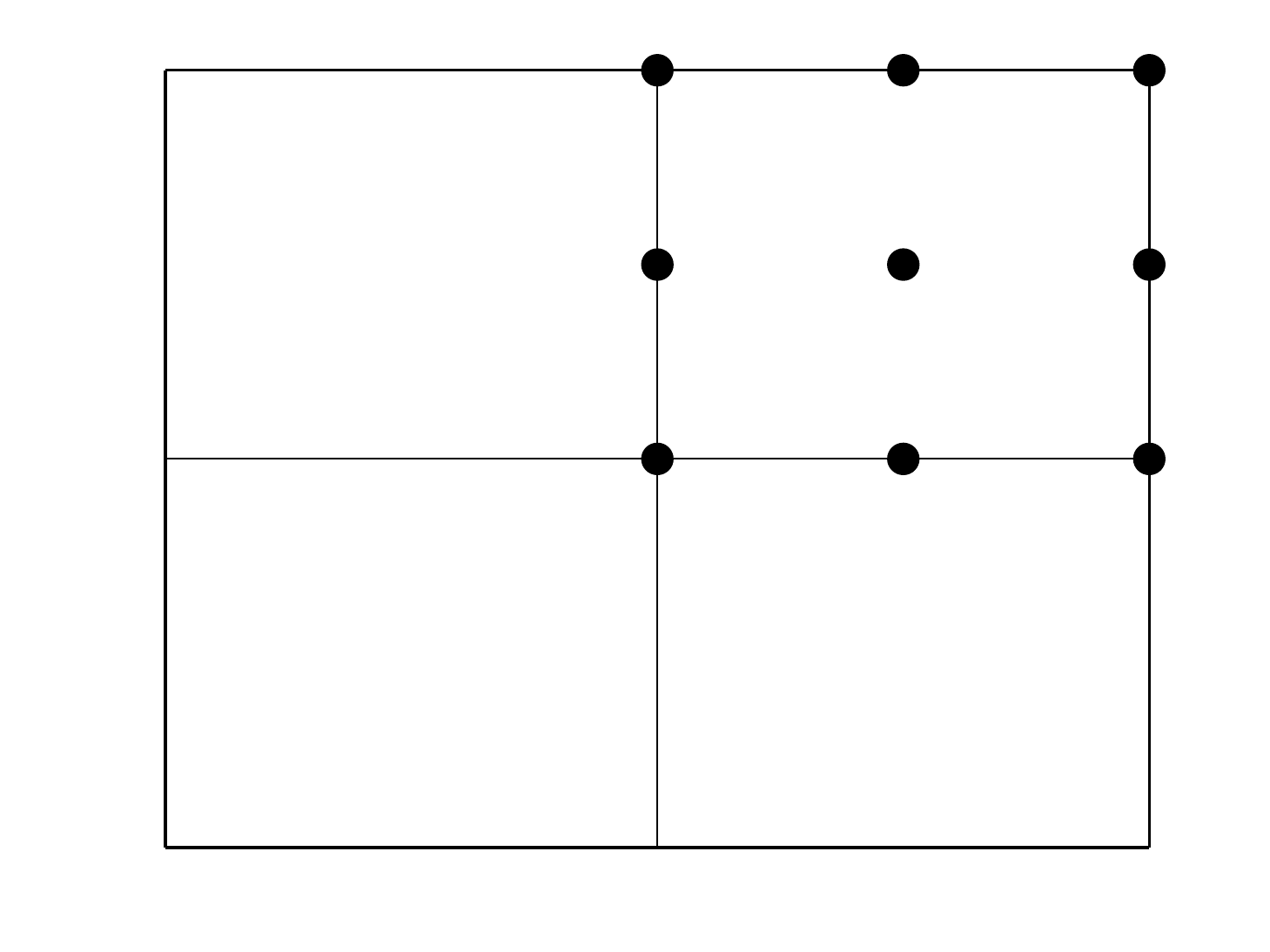}}
  \\
  \subfloat[Vertex-wise Vanka block. \label{fig:EDG-Vanka-patch-plot_a}]
  {\includegraphics[width=0.4\textwidth]{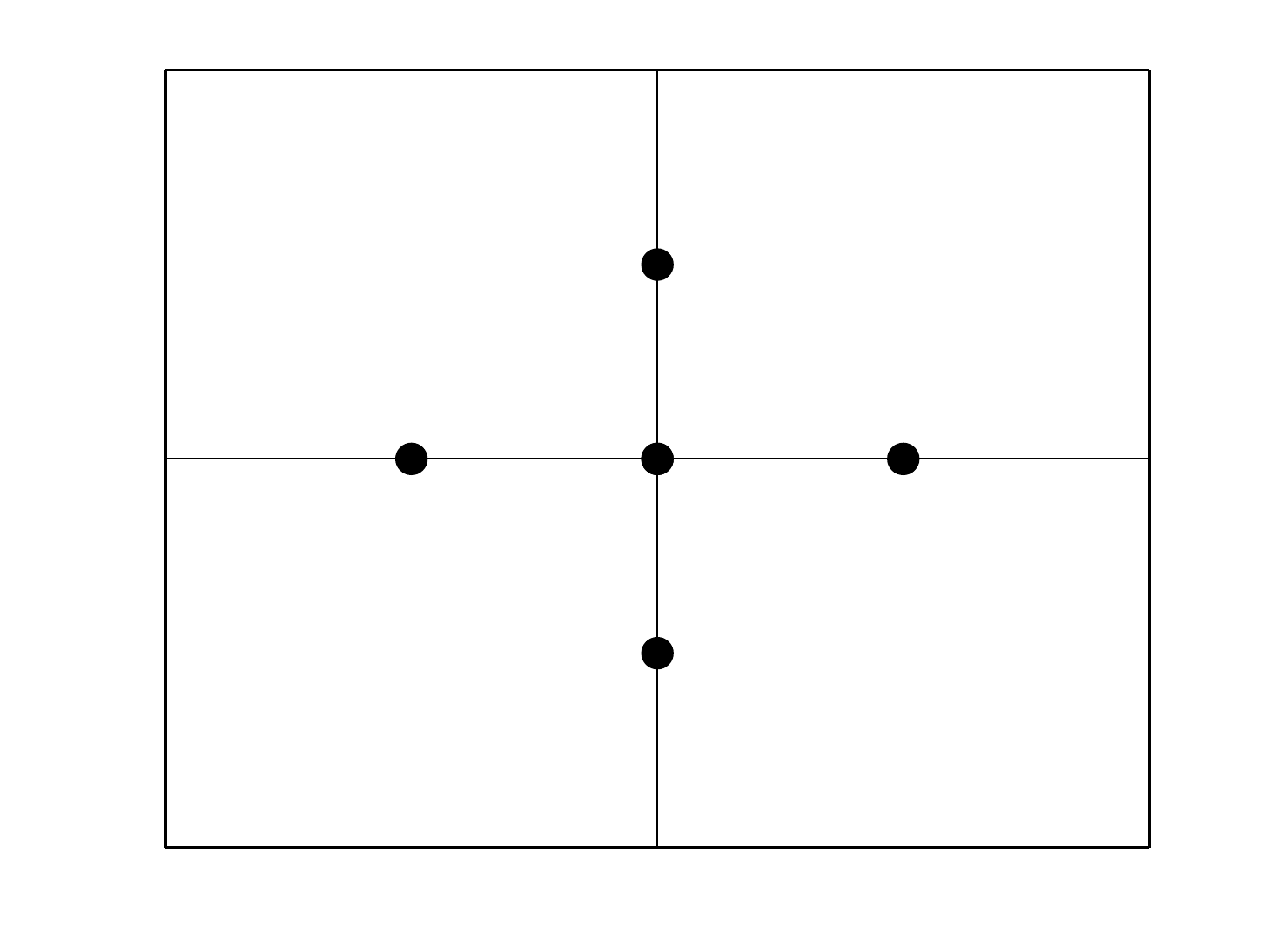}}
  \subfloat[Element-wise Vanka block. \label{fig:EDG-Vanka-patch-plot_b}]
  {\includegraphics[width=0.4\textwidth]{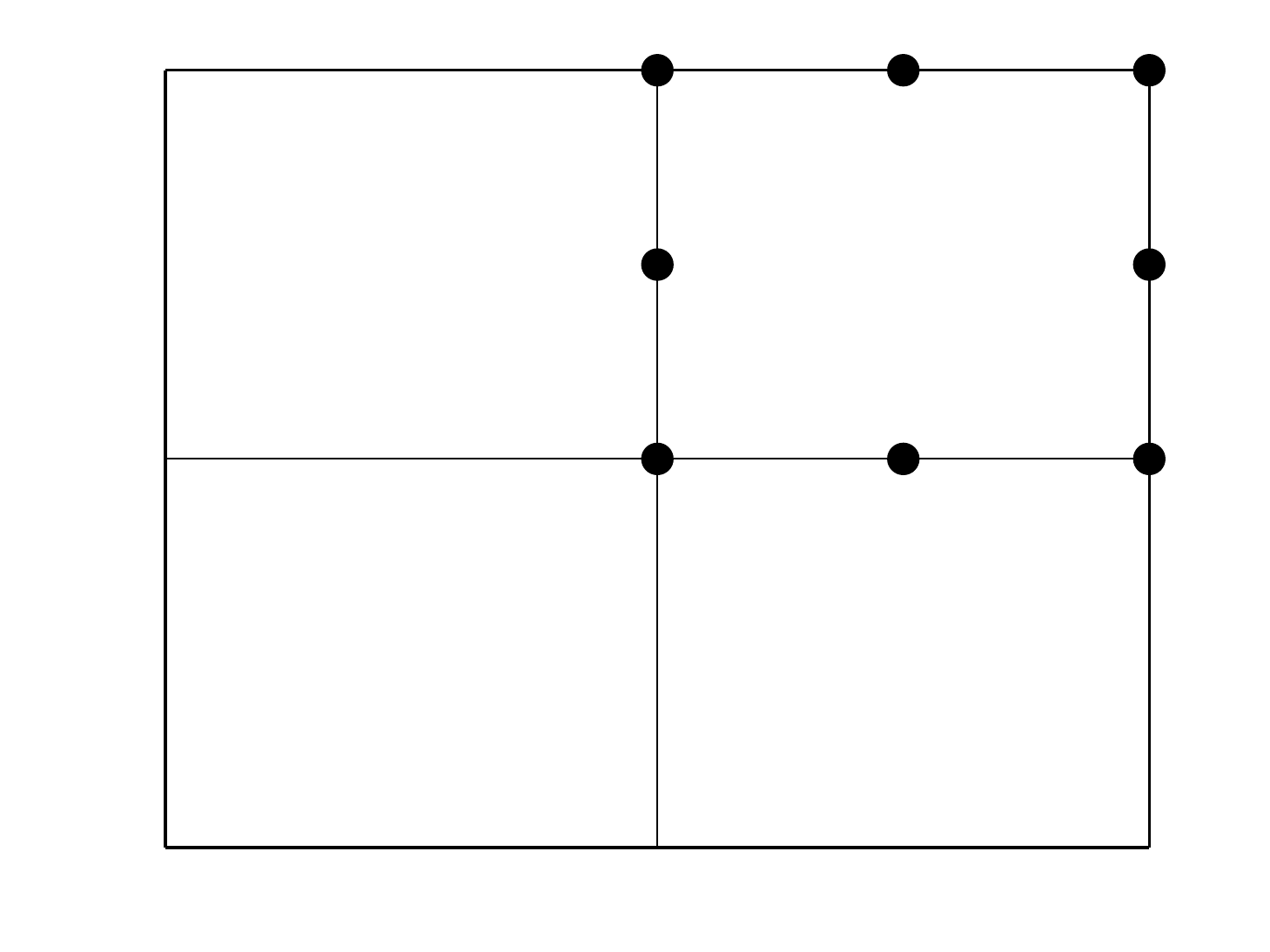}}
  \\
  \subfloat[Vertex-wise Vanka block. \label{fig:HDG-Vanka-patch-plot_a}]
  {\includegraphics[width=0.4\textwidth]{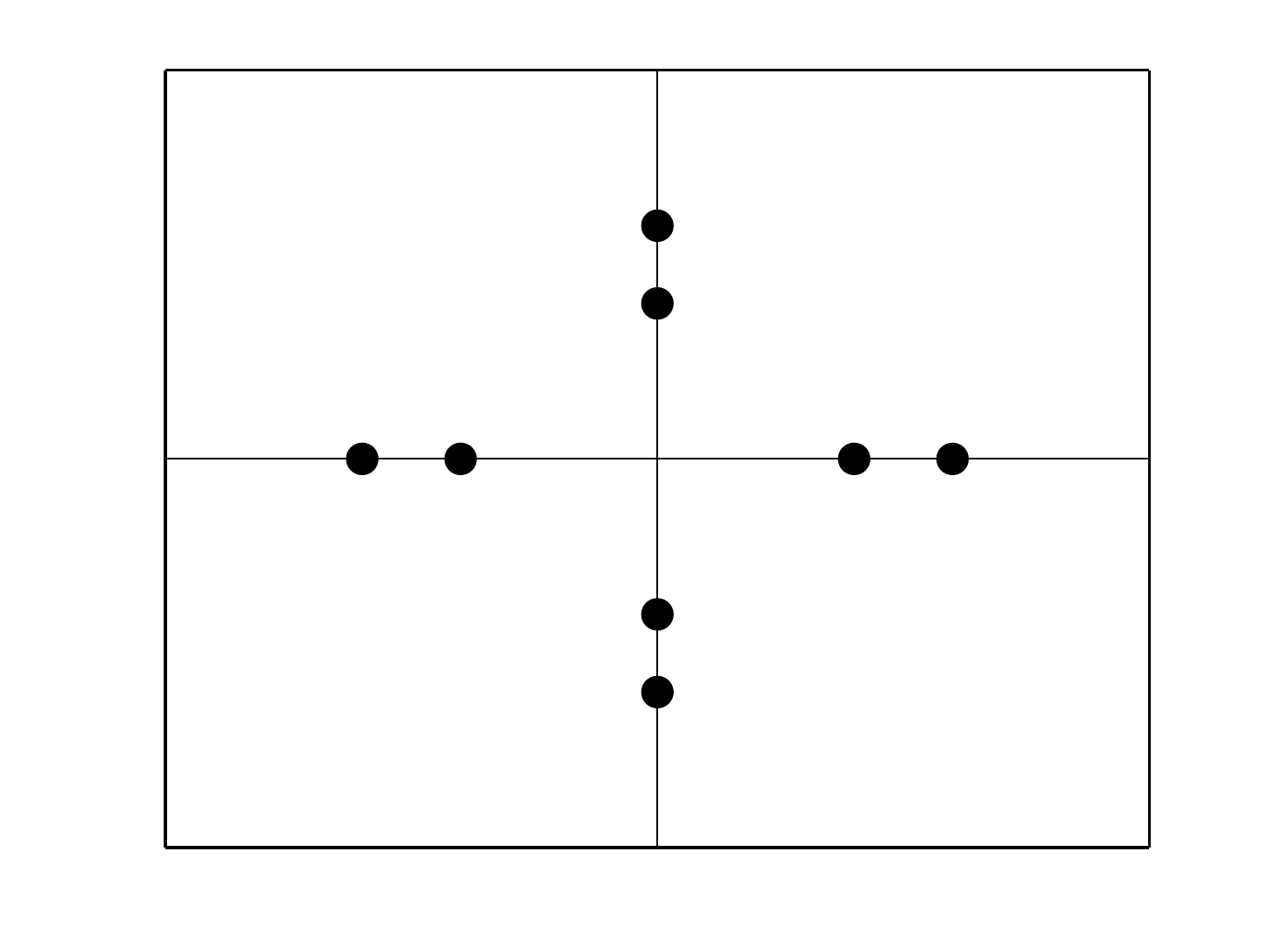}}
  \subfloat[Element-wise Vanka block. \label{fig:HDG-Vanka-patch-plot_b}]
  {\includegraphics[width=0.4\textwidth]{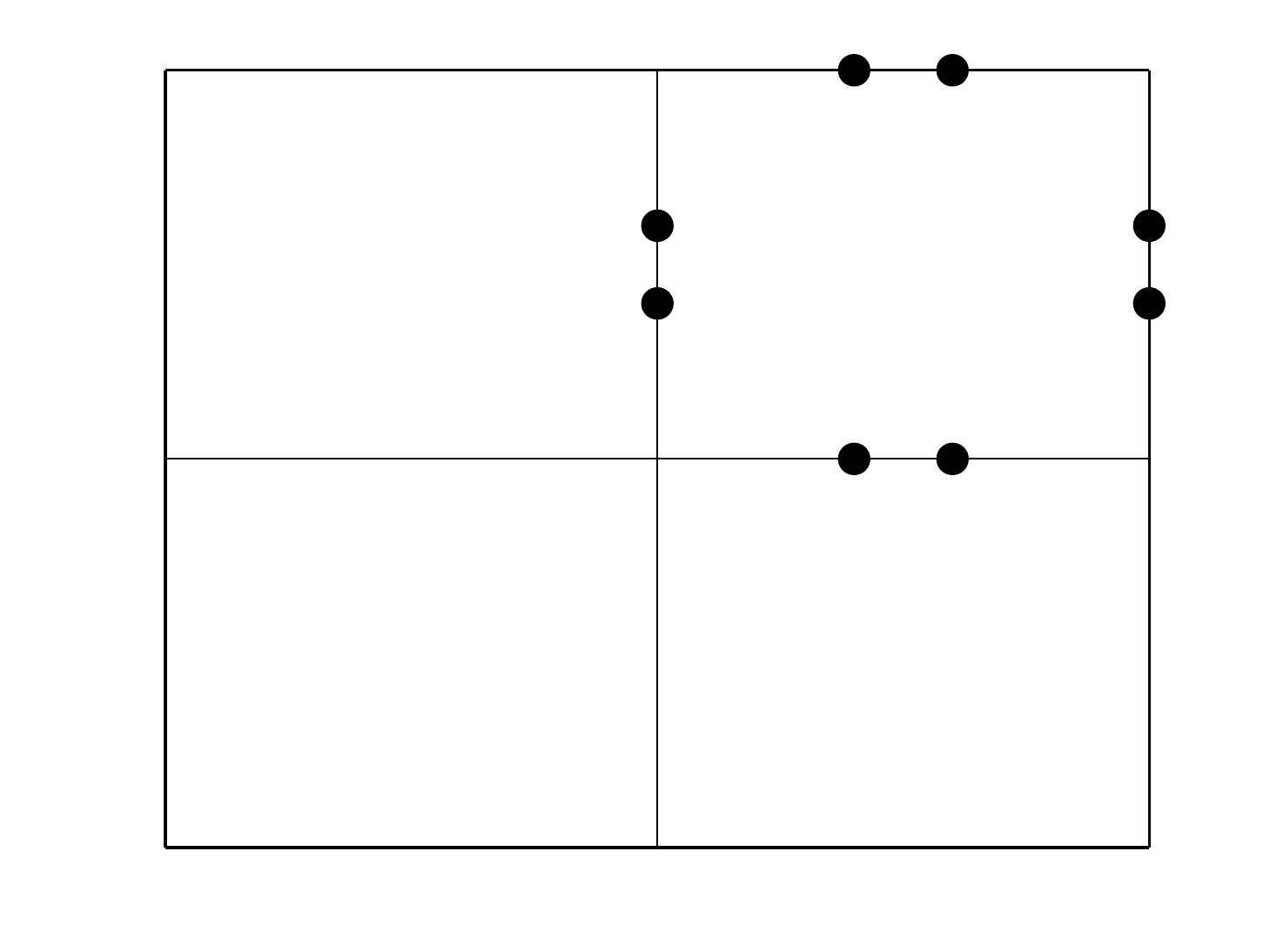}}
  \caption{Vertex- and element-wise Vanka patches. Top row: CG
    ($k=2$). Middle row: EDG ($k=2$). Bottom row: HDG ($k=1$).}
  \label{fig:Vanka-patches-plot}
\end{figure}

Given the set $\mathcal{D}_i$ we next describe the weight matrix
$W_i = \text{diag}(w_1^i,\hdots,w_{m_i}^i)$. Here $w_k^i$ is the
reciprocal of the number of patches that contain DOF $k$. For example,
consider the case of Continuous Galerkin with $k=2$ and a vertex-wise
Vanka block (see \cref{fig:CG-Vanka-patch-plot_a}). The vertex DOF is
not shared by other patches, therefore its weight is 1. The edge
degrees-of-freedom are shared by two patches, therefore their weight
is 1/2. The DOFs on the interior of an element are shared by four
patches, therefore their weight is 1/4. For this example we
furthermore note that $K_i$ is a $9 \times 9$ matrix and, if $K_h$ is
an $n\times n$ matrix, $V_i$ is a $9 \times n$ matrix.

We end this section by noting that the size of $K_i$ increases with
increasing degree of the polynomial approximation used in the
discretization. To apply the additive Vanka smoother
\cref{eq:vanka-smoother} it is necessary to compute the inverse of
$K_i$. To reduce the cost of computing $K_i^{-1}$ we therefore also
consider a lower-triangular approximation to $K_i$. We refer to this
as Lower-Triangular-Vertex-Wise and Lower-Triangular-Element-Wise
Vanka relaxation.

\subsection{Grid-transfer operators and the coarse-grid operator}

While standard finite-element interpolation as in
\cite{Sampath2010FEM} can be used for a Continuous Galerkin finite
element method, this approach is not possible for hybridized methods
since $\bar{X}_{m,h} \nsubseteq \bar{X}_{n,h}$ with $m=n+1$. We
therefore use the Dirichlet-to-Neumann (DtN) interpolation from
\cite{Wildey:2019} which we describe next for the HDG and EDG methods.

In what follows, we assume $m=n+1$.  First we decompose the set of all
fine-level faces in $\mathcal{T}_{n,h}$ as
$\mathcal{F}_{n,h} = \mathcal{F}_{n,h}^I \oplus \mathcal{F}_{n,h}^B$,
with $\mathcal{F}_{n,h}^B$ the set of all faces in $\mathcal{F}_{n,h}$
that are in the coarse-level face set $\mathcal{F}_{m,h}$ (they form
the boundaries of the coarse elements), and with $\mathcal{F}_{n,h}^I$
the set of all faces in $\mathcal{F}_{n,h}$ that are not in
$\mathcal{F}_{m,h}$ (they lie in the interior of the coarse
elements). The idea is then to split the set of degrees-of-freedom of
${\bf \bar{u}}_h$ into two groups
$\mathcal{D} =\mathcal{D}_I \cup \mathcal{D}_B$, where $\mathcal{D}_B$
is the set of DOFs located on the edges in $\mathcal{F}_{n,h}^B$ and
$\mathcal{D}_I$ is the set of DOFs located on the edges in
$\mathcal{F}_{n,h}^I$. We illustrate this for the HDG method with
$k=1$ on a rectangular mesh in \cref{fig:HDG-FC-grid-plot}. Then
denoting by $\bar{X}_h^B$ the part of $\bar{X}_h$ corresponding to
$\mathcal{D}_B$, it is clear that
$\bar{X}_{m,h} \subset \bar{X}_{n,h}^B$.

We adapt \cite{Sampath2010FEM} to define the first part of the
prolongation operator mapping, from $\bar{X}_{m,h}$ to
$\bar{X}_{n,h}^B$. In particular, we define
$P_B : \bar{X}_{m,h} \to \bar{X}_{n,h}^B$, with $m=n+1$, as
\begin{equation}
  \sum_{F\in\mathcal{F}_{n,h}^B}
  \langle P_B\bar{v}, \bar{u} \rangle_{F}
  =
  \sum_{F\in\mathcal{F}_{n,h}^B}
  \langle \bar{v}, \bar{u} \rangle_{F}
  \quad
  \forall \bar{v} \in \bar{X}_{m,h},\ \bar{u} \in \bar{X}_{n,h}^B.
\end{equation}
For the multigrid method, however, we require a prolongation operator
$P_{m,h}^{n,h}$ mapping from $\bar{X}_{m,h}$ to the whole of
$\bar{X}_{n,h}$, which we discuss next.

\begin{figure}[tbp]
  \centering
  \subfloat[Coarse grid. \label{fig:HDG-FC-grid-plot_a}]
  {\includegraphics[width=0.45\textwidth]{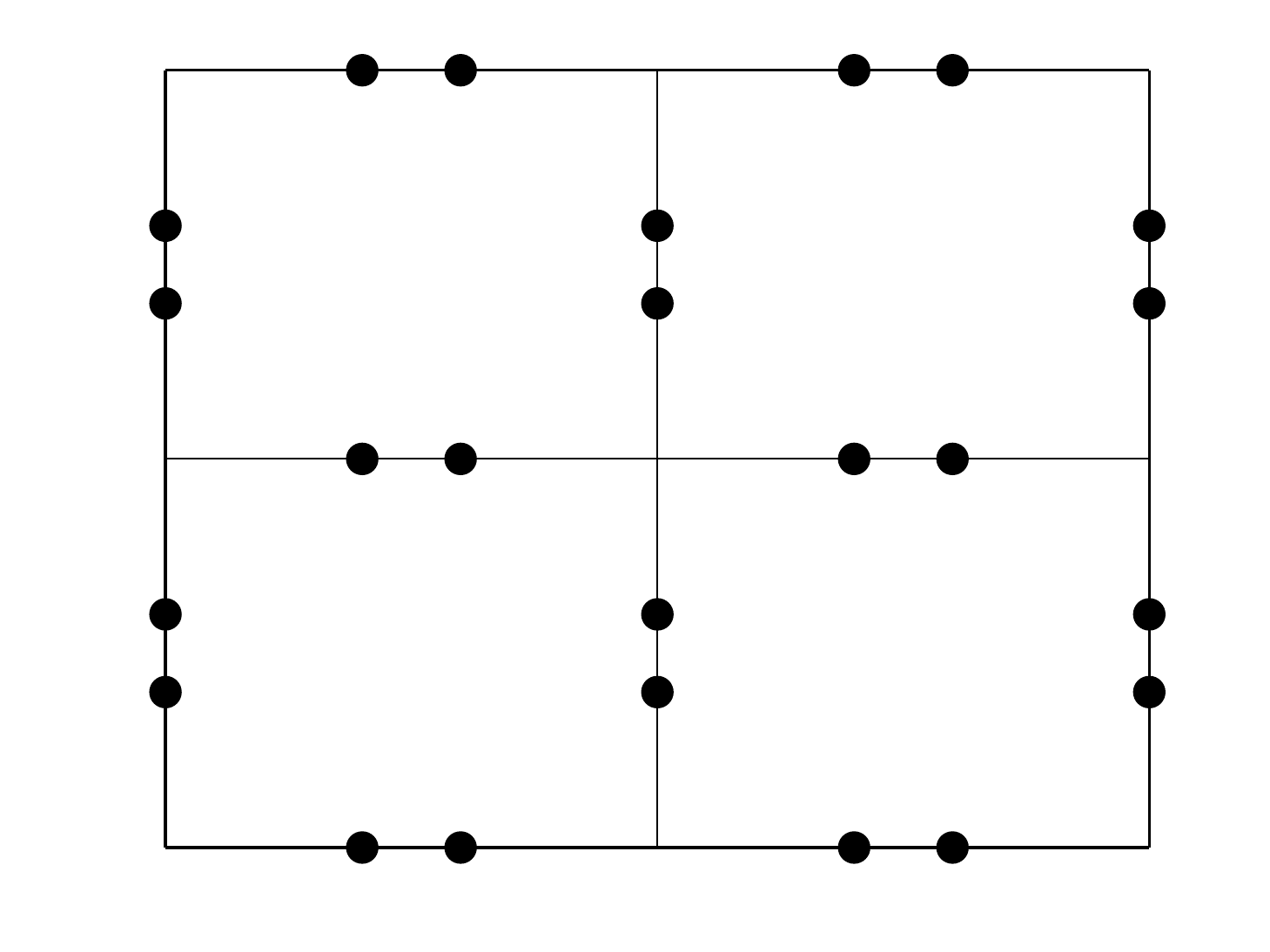}}
  \subfloat[Fine grid. \label{fig:HDG-FC-grid-plot_b}]
  {\includegraphics[width=0.45\textwidth]{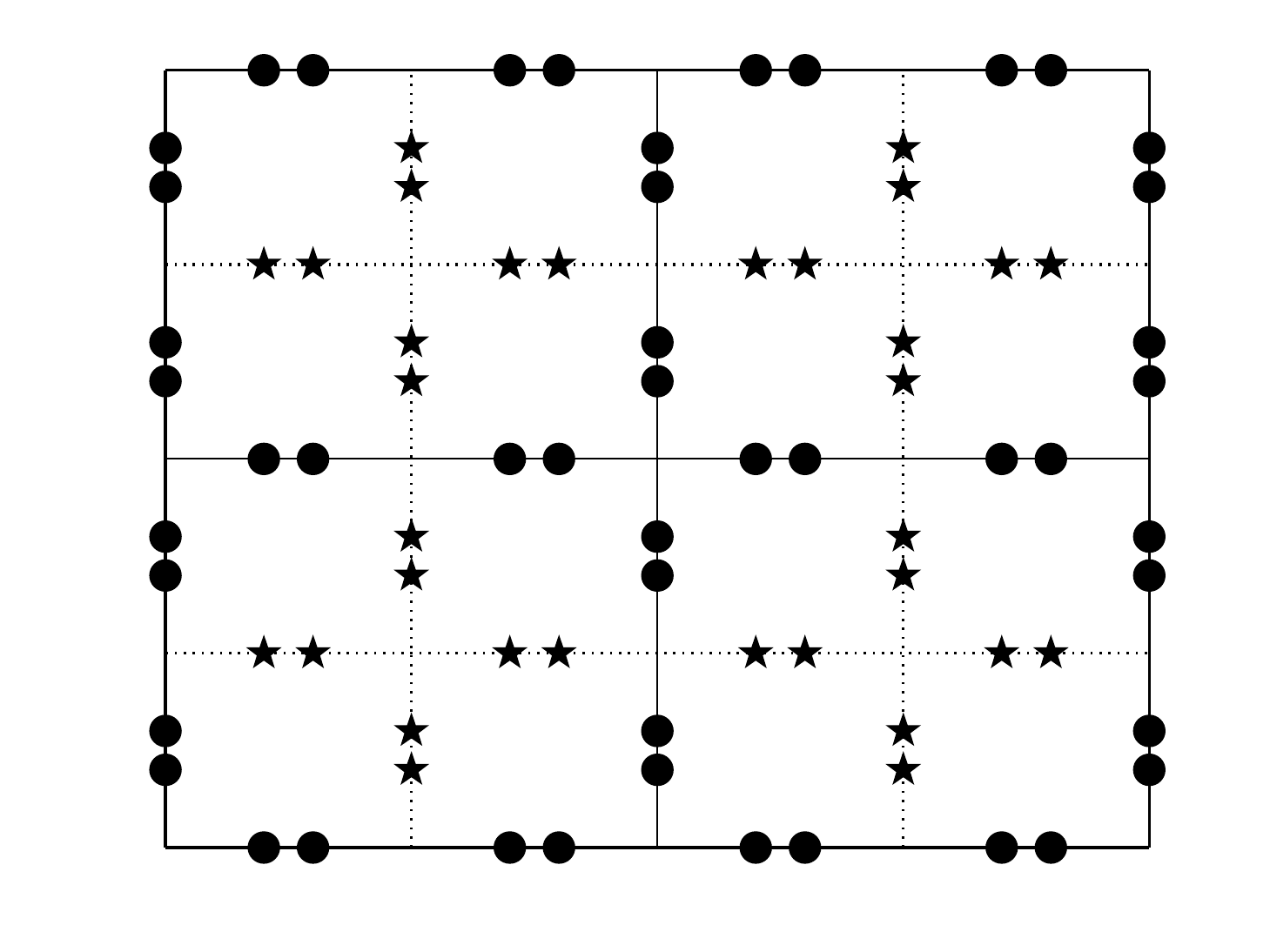}}
  \caption{Coarse and fine grids for HDG with $k=1$. The filled
    circles are DOFs in $\mathcal{D}_B$ and the stars are DOFs in
    $\mathcal{D}_I$.}
  \label{fig:HDG-FC-grid-plot}
\end{figure}

Splitting the vector ${\bf \bar{u}}_h$ in \cref{eq:matsystemred} into
DOFs in $\mathcal{D}_I$ and $\mathcal{D}_B$ we can write
\cref{eq:matsystemred} as
\begin{equation*}
  \begin{bmatrix}
    K_{II} & K_{IB}\\ K_{BI} & K_{BB}
  \end{bmatrix}
  \begin{bmatrix}
    {\bf \bar{u}}_{I}\\ {\bf \bar{u}}_{B}
  \end{bmatrix}
    = f_h.
\end{equation*}
We can then define the DtN prolongation operator as
\begin{equation}
  \label{eq:DtN-map}
  P_{m,h}^{n,h} =
  \begin{bmatrix}
    P_I \\ P_B
  \end{bmatrix}
  =
  \begin{bmatrix}
    -K^{-1}_{II} K_{IB} P_B \\ P_B
  \end{bmatrix}.
\end{equation}
We remark that this operator can be computed locally. We furthermore
remark that it was shown in \cite{Wildey:2019} that this DtN
prolongation operator preserves energy when transferring information
between two levels, i.e., for $m=n+1$,
\begin{equation*}
  \bar{a}_{n,h}(P_{m,h}^{n,h}\bar{v}_{m,h}, P_{m,h}^{n,h}\bar{v}_{m,h})
  =
  \bar{a}_{m,h}(\bar{v}_{m,h}, \bar{v}_{m,h}) \quad \forall \bar{v}_{m,h} \in \bar{X}_{m,h}.
\end{equation*}

Given the prolongation operator we then define the restriction
operator as $R_{n,h}^{m,h} = (P_{m,h}^{n,h})^T$ and use the Galerkin
approximation of $K_{n,h}$ as our coarse-grid operator, i.e.,
$K_{m,h} = R_{n,h}^{m,h}K_{n,h}P_{m,h}^{n,h}$.

\section{A local Fourier analysis framework for HDG, EDG, and CG}
\label{sec:LFA-tg}

Let $\mathcal{T}_h$ denote a fine mesh of $\Omega$ and
$\mathcal{T}_{H}$ denote a coarse mesh of $\Omega$ such that
$H=2h$. From now on, all meshes we consider will be Cartesian. The
two-grid error-propagation operator is given by
\begin{equation}
  \label{TG-error-form}
  E_h = S_h^{\nu_2}(I_h - P_H^h(K_H)^{-1}R_h^HK_h)S_h^{\nu_1},
\end{equation}
where $\nu_1$, $\nu_2$ are the number of pre- and post relaxation
sweeps, respectively, and $I_h$ denotes the identity operator. To
analyze the two-grid error-propagation operator, and hence to obtain a
measure of the efficiency of the two-grid method applied to HDG, EDG,
and CG discretizations of the Poisson problem, we use Local Fourier
Analysis (LFA) \cite{Trottenberg:book,Wienands:2005}.

LFA was introduced in \cite{Brandt1977LFA} to study the convergence
behavior of multigrid methods for boundary value problems. Assume
$L_h$ is a discrete operator obtained by discretizing a PDE on an
infinite two dimensional domain.  $L_h$ can be thought of as a matrix
of infinite size, but we represent it by operators that operate on the
DOFs near a generic grid point and that are specified by
two-dimensional stencils that we assume have constant stencil
coefficients. The eigenfunctions of $L_h$ can be expressed by discrete
Fourier modes, resulting in a representation of $L_h$ by an
$r\times r$ matrix,
$\tilde{L}_h(\boldsymbol{\theta}) \in \mathbb{C}^{r \times r}$ where
$\boldsymbol{\theta} \in (-\pi/2, 3\pi/2]^2$ and $r \ge 1$ is small
and depends on the discretization. This
$\tilde{L}_h(\boldsymbol{\theta})$ is called the symbol of $L_h$.
Specifically, for a scalar PDE, when there is a single degree of
freedom located on the mesh of a cartesian grid, then $r=1$ and the
symbol is a scalar.  However, when there are different degrees of
freedom that may be located at different locations on the grid
(e.g. nodes, edges, centers, etc.), then $r>1$ equals the number of
different degrees of freedom, and the symbol is matrix-valued.

Fundamental to LFA is that the properties of $L_h$ can be described by
the small matrix $\tilde{L}_h(\boldsymbol{\theta})$. For example, the
efficiency of multigrid on a finite grid is measured by the spectral
radius of the two-grid error-propagation matrix $E_h$
\cref{TG-error-form}. However, since $E_h$ is typically very large, we
will find instead the spectral radius of the symbol of the two-grid
error-propagation operator corresponding to the extension of $E_h$ to
the infinite domain. In contrast to node-based discretization problems
with one DOF per node, we consider here discretizations with multiple
DOFs per node, edge, and element. The key to LFA is the identification
of the eigenspace of $K_h$ and the symbol of $K_h$, and finding the
LFA representation of $E_h$ defined by \cref{TG-error-form}. This will
be done in this section. The methodology used in our LFA analysis for
high-order hybridized and embedded discontinuous Galerkin
discretizations of the Laplacian is similar to the methods used in the
recent papers \cite{Boonen2008curl, de2020convergence,
  farrell2019local, He:2019, hetwo, Maclachlan:2011, Rodrigo:2015}.
The theory for the invariant space of edge-based operators was first
given in \cite{Boonen2008curl}, for the curl-curl equations.  While
describing the general principles of the approach in the next
sections, we extend the theory of \cite{Boonen2008curl} to the case
where we have multiple degrees of freedom located on multiple
different grid locations, including vertical and horizontal edges,
nodes and cell centers of the grid.

\subsection{Infinite grid}

For our analysis we follow a similar approach as \cite{Boonen2008curl}
by first defining an appropriate infinite grid and subgrids. To define
these subgrids, we first lump all the $X$-type DOFs on a horizontal
edge and $Y$-type DOFs on a vertical edge to the midpoint of that
edge. All $C$-type DOFs will be lumped to the center of an element and
$N$-type DOFs are located at a node. We then consider the following
two-dimensional infinite uniform grid
$\boldsymbol{G}_h=\cup_{\alpha\in\{N,X,Y,C\}}\,
\boldsymbol{G}_h^{\alpha}$, with subgrids
\begin{equation}
  \label{four-mesh-types}
  \bm{G}^{\alpha}_{h}
  =
  \cbr[1]{ \boldsymbol{x}^{\alpha} := (x^{\alpha}_1,x^{\alpha}_2) = (k_{1},k_{2})h + \sigma^{\alpha},\ (k_1,k_2) \in \mathbb{Z}^2},
\end{equation}
where
\begin{equation*}
  \sigma^{\alpha} =
  \begin{cases}
    (0,0)     & \text{if } \alpha=N,\\
    (h/2,0)   & \text{if } \alpha=X,\\
    (0,h/2)   & \text{if } \alpha=Y,\\
    (h/2,h/2) & \text{if } \alpha=C.
  \end{cases}
\end{equation*}
We will refer to $\boldsymbol{G}^{N}_{h}$ as $N$-type points on the
grid $\boldsymbol{G}_h$, associated with $N$-type DOFs. Furthermore,
$\boldsymbol{G}^{X}_{h}$, $\boldsymbol{G}^{Y}_{h}$, and
$\boldsymbol{G}^{C}_{h}$ will be referred to as $X$-, $Y$-, and
$C$-type points on the grid $\boldsymbol{G}_h$. These are associated,
respectively, with $X$-, $Y$-, and $C$-type DOFs.  We remark that it
is possible that there is more than one DOF at a particular
location. We will use subscripts to distinguish between them. For
example, $X_1$ and $X_2$ are two $X$-type DOFs on a horizontal
edge. The coarse grids $\boldsymbol{G}_H$ are defined similarly.

\begin{remark}
  The CG method for $k>1$ consists of $N$-, $X$-, $Y$-, and $C$-type
  DOFs and therefore requires all four subgrids
  $\boldsymbol{G}_h^{\alpha}$, $\alpha =N,X,Y,C$. We refer to
  \cite{hetwo} for the case $k=2$. For $k=1$ CG only has $N$-type DOFs
  and therefore requires only the $\boldsymbol{G}_h^N$ grid. EDG is
  identical to CG for $k=1$. For the EDG method for $k>1$ the $C$-type
  DOFs have been eliminated by static condensation and therefore
  requires only the subgrids $\boldsymbol{G}_h^{\alpha}$,
  $\alpha=N,X,Y$. For $k \ge 1$ the HDG method only has $X$- and
  $Y$-type DOFs and so requires the subgrids $\boldsymbol{G}_h^X$ and
  $\boldsymbol{G}_h^Y$.
\end{remark}

\subsection{Partitioning of the discrete operator}
\label{ss:partitioningdiscop}

Let $K_h$ be the EDG/HDG matrix given by \cref{eq:matsystemred} or the
matrix obtained from a continuous Galerkin discretization of the
Laplacian defined on the mesh $\boldsymbol{G}_h$. We will treat $K_h$
as an operator on the infinite mesh $\boldsymbol{G}_h$. To take into
account that the DOFs on $\boldsymbol{G}_h^N$, $\boldsymbol{G}_h^X$,
$\boldsymbol{G}_h^Y$, and $\boldsymbol{G}_h^C$ are different, we
partition the operator $K_h$ according to the different groups of
DOFs:
\begin{equation}
  \label{eq:Lhpartitioning}
  K_h =
  \begin{bmatrix}
    K_{NN} &K_{NX}     &K_{NY}  &K_{NC} \\
    K_{XN} &K_{XX}     &K_{XY}  &K_{XC} \\
    K_{YN} &K_{YX}     &K_{YY}  &K_{YC} \\
    K_{CN} &K_{CX}     &K_{CY}  &K_{CC}
  \end{bmatrix}.
\end{equation}
When thinking about $K_h$ as an infinite matrix, this corresponds to
ordering the grid points in $\boldsymbol{G}_h$ in the order of the
subgrids $\boldsymbol{G}_h^{\alpha}$, $\alpha\in\{N,X,Y,C\}$, and, for
example, operator $K_{NC}$ is a mapping from a grid function on
$C$-type points to a grid function on $N$-type points.

Note furthermore that the EDG method does not include the
$C$-type DOFs and the HDG method does not include the $C$- and
$N$-type DOFs.
As an example, consider the HDG method. Then we write
\begin{equation}
  K_h =
  \begin{bmatrix}
    K_{XX} & K_{XY} \\
    K_{YX} & K_{YY}
  \end{bmatrix}.
\end{equation}
If, furthermore, $k=1$ in HDG (so that there are two DOFs per edge),
then we can again partition the submatrix $K_{\alpha\beta}$,
$\alpha,\beta \in \cbr[0]{X,Y}$ into a $2 \times 2$-block matrix,
which is given by
\begin{equation}
  \label{eq:secd-partition}
  K_{\alpha\beta} =
  \begin{bmatrix}
    K_{\alpha_1\beta_1} & K_{\alpha_1\beta_2} \\
    K_{\alpha_2\beta_1} & K_{\alpha_2\beta_2}
  \end{bmatrix},
\end{equation}
where, for example, the operator $K_{X_1Y_2}$ maps from the second
degree of freedom on the $Y$-edges to the first degree of freedom on
the $X$-edges.

In the general case we use the short-hand notation
\begin{equation}
  \label{eq:block-matrix-to-operator}
  K_h =(K_{\alpha_i\beta_j}), \quad \text{with}\quad
  \alpha,\beta \in\{N,X,Y,C\},\
  i=1,\cdots, r_{\alpha},\
  j=1,\cdots,r_{\beta},
\end{equation}
where $r_{\alpha}$ is the number of DOFs on a single edge (if
$\alpha = X$ or $\alpha = Y$), in a single node (if $\alpha = N$), or
on a single element (if $\alpha = C$). Let $r=r_N+r_X+r_Y+r_C$ be the
total number of DOFs per element.

Let $w_h(\boldsymbol{x})$ be a grid function on $\boldsymbol{G}_h$,
and by $w_h(\boldsymbol{x}^\alpha)$ we mean the grid function
$w_h(\boldsymbol{x})$ restricted to grid points
$\boldsymbol{x}^\alpha \in \bm{G}^{\alpha}_{h} \subset \bm{G}_{h}$.
Consider operator $K_{\alpha_i\beta_j}$ from the $jth$ degree of
freedom on grid $\bm{G}^{\beta}_{h}$ to the $ith$ degree of freedom on
grid $\bm{G}^{\alpha}_{h}$.  The action of linear operator
$K_{\alpha_i\beta_j}$ on grid function $w_h(\boldsymbol{x})$ is given
by
\begin{equation}
  \label{defi-stencil-elements}
  K_{\alpha_i\beta_j}w_{h}(\boldsymbol{x}^\alpha) =
  \sum_{\boldsymbol{\kappa}\in\boldsymbol{V}_{\alpha_i\beta_j}} s^{(\alpha_i,\beta_j)}_{\boldsymbol{\kappa}}
  w_{h}(\boldsymbol{x}^\alpha+\boldsymbol{\kappa}h),
\end{equation}
where the (constant) coefficients
$s_{\boldsymbol{\kappa}}^{(\alpha_i,\beta_j)}$ define the stencil
representation of $K_{\alpha_i\beta_j}$ as
\begin{equation}
  \label{defi-symbol-operator}
  K_{\alpha_i\beta_j}  := [s_{\boldsymbol{\kappa}}]_{\alpha_i\beta_j}.
\end{equation}
Here we assume that only a finite number of coefficients
$s^{(\alpha_i,\beta_j)}_{\boldsymbol{\kappa}}$ are nonzero, i.e.,
$\boldsymbol{V}_{\alpha_i\beta_j}$ is a finite set of offset vectors
$\boldsymbol{\kappa}$ such that
$\boldsymbol{x}^\alpha+\boldsymbol{\kappa}h \in
\boldsymbol{G}_h^{\beta}$.  Note that our notation
$K_{\alpha_i\beta_j}w_{h}(\boldsymbol{x}^\alpha)$ in
\cref{defi-stencil-elements} emphasizes that the grid function
$K_{\alpha_i\beta_j}w_{h}$ is defined on grid $\bm{G}^{\alpha}_{h}$,
and its function values are linear combinations of values of grid
function $w_{h}(\boldsymbol{x})$ restricted to grid
$\bm{G}^{\beta}_{h}$, as expressed in the right-hand side of
\cref{defi-stencil-elements}.

We now describe the four different possible stencil types for
$K_{\alpha_i\beta_j}$. For this, let
$\boldsymbol{x} \in \boldsymbol{G}_h^{\alpha}$ with
$\alpha \in \cbr[0]{N,X,Y,C}$ and denote by $\odot$ a grid point in
$\boldsymbol{G}_h^{\alpha}$ on which the operator acts. Furthermore,
let $i$ be a fixed value in $\{1, \hdots, r_{\alpha}\}$.

{\bf Stencil type 1.} This considers the case where $\alpha_i$ and
$\beta_j$ in \cref{defi-symbol-operator} share the same grid
locations, i.e., $\beta_j=\alpha_j$. Let
$\boldsymbol{V}_{\alpha_i\alpha_j}$ be a finite set such that
$\boldsymbol{x}+\boldsymbol{\kappa}h \in \boldsymbol{G}_h^{\alpha}$
for $\kappa \in \boldsymbol{V}_{\alpha_i\alpha_j}$. Then
\begin{equation*}
  [s_{\bm{\kappa}}]_{\alpha_i\alpha_j}
  = \begin{bmatrix}
            &       \vdots                   &      \vdots                     &            \vdots                    &      \\
     \cdots &  s^{(\alpha_i,\alpha_j)}_{-1,1} &    s^{(\alpha_i,\alpha_j)}_{0,1} &        s^{(\alpha_i,\alpha_j)}_{1,1}  &\cdots\\
            &  s^{(\alpha_i,\alpha_j)}_{-1,0} &    s^{(\alpha_i,\alpha_j)}_{0,0}(\odot) & s^{(\alpha_i,\alpha_j)}_{1,0}  &\\
    \cdots  &  s^{(\alpha_i,\alpha_j)}_{-1,-1}&    s^{(\alpha_i,\alpha_j)}_{0,-1}&        s^{(\alpha_i,\alpha_j)}_{1,-1} & \cdots\\
            &       \vdots                   &      \vdots                     &            \vdots                    &      \\
  \end{bmatrix},
\end{equation*}
where $s_{\kappa_1,\kappa_2}^{(\alpha_i,\alpha_j)} \in \mathbb{R}$
depends on the discretization. Note that
$s^{(\alpha_i,\alpha_j)}_{\kappa_1,\kappa_2}$ is the value of the
stencil at $\boldsymbol{x}+\boldsymbol{\kappa}h$.

{\bf Stencil type 2.} Next, let $\boldsymbol{V}_{\alpha_i\beta_j}$ be
a finite set such that
$\boldsymbol{x}+\boldsymbol{\kappa}h \in \boldsymbol{G}_h^{\beta}$
with $\kappa \in \boldsymbol{V}_{\alpha_i\beta_j}$ and
$\boldsymbol{G}_h^{\beta} = \cbr[0]{\boldsymbol{y} := \boldsymbol{x} +
  (h/2,0),\ \boldsymbol{x} \in \boldsymbol{G}_h^{\alpha}}$. Then
\begin{equation*}
  [s_{\bm{\kappa}}]_{\alpha_i\beta_j}
  = \begin{bmatrix}
            &       \vdots                            &           &     \vdots                                   &      \\
     \cdots &  s^{(\alpha_i,\beta_j)}_{-\frac{1}{2},1} &           &       s^{(\alpha_i,\beta_j)}_{\frac{1}{2},1}  &\cdots\\
            &  s^{(\alpha_i,\beta_j)}_{-\frac{1}{2},0} &    \odot  &       s^{(\alpha_i,\beta_j)}_{\frac{1}{2},0}  &       \\
    \cdots  &  s^{(\alpha_i,\beta_j)}_{-\frac{1}{2},-1}&           &       s^{(\alpha_i,\beta_j)}_{\frac{1}{2},-1} & \cdots\\
            &       \vdots                            &           &     \vdots                                   &      \\
  \end{bmatrix}.
\end{equation*}

{\bf Stencil type 3.} Now let $\boldsymbol{V}_{\alpha_i\beta_j}$ be a
finite set such that
$\boldsymbol{x}+\boldsymbol{\kappa}h \in \boldsymbol{G}_h^{\beta}$
with $\kappa\in \boldsymbol{V}_{\alpha_i\beta_j}$ and
$\boldsymbol{G}_h^{\beta} = \cbr[0]{\boldsymbol{y} := \boldsymbol{x} +
  (0,h/2),\ \boldsymbol{x} \in \boldsymbol{G}_h^{\alpha}}$.  Then
\begin{equation*}
  [s_{\bm{\kappa}}]_{\alpha_i\beta_j}
  = \begin{bmatrix}
            &       \vdots                            &      \vdots                              &     \vdots                             &      \\
     \cdots &  s^{(\alpha_i,\beta_j)}_{-1,\frac{1}{2}} &    s^{(\alpha_i,\beta_j)}_{0,\frac{1}{2}} & s^{(\alpha_i,\beta_j)}_{1,\frac{1}{2}}  &\cdots\\
            &                                         &       \odot                              &                          &\\
    \cdots  &  s^{(\alpha_i,\beta_j)}_{-1,-\frac{1}{2}}&    s^{(\alpha_i,\beta_j)}_{0,-\frac{1}{2}} & s^{(\alpha_i,\beta_j)}_{1,-\frac{1}{2}} & \cdots\\
            &       \vdots                            &      \vdots                               &     \vdots                             &      \\
  \end{bmatrix}.
\end{equation*}

{\bf Stencil type 4.} Finally, let $\boldsymbol{V}_{\alpha_i\beta_j}$
be a finite set such that
$\boldsymbol{x}+\boldsymbol{\kappa}h \in \boldsymbol{G}_h^{\beta}$
with $\kappa \in \boldsymbol{V}_{\alpha_i\beta_j}$ and
$\boldsymbol{G}_h^{\beta} = \cbr[0]{\boldsymbol{y} := \boldsymbol{x} +
  (h/2, h/2),\ \boldsymbol{x} \in \boldsymbol{G}_h^{\alpha}}$.  Then

\begin{equation*}
  [s_{\bm{\kappa}}]_{\alpha_i\beta_j}
  = \begin{bmatrix}
            &       \vdots                            &           &     \vdots                 &      \\
     \cdots &  s^{(\alpha_i,\beta_j)}_{-\frac{1}{2},\frac{1}{2}}   &                           & s^{(\alpha_i,\beta_j)}_{\frac{1}{2},\frac{1}{2}}  &\cdots\\
            &                                         &        \odot&                          &\\
    \cdots  &  s^{(\alpha_i,\beta_j)}_{-\frac{1}{2},-\frac{1}{2}}&                           & s^{(\alpha_i,\beta_j)}_{\frac{1}{2},-\frac{1}{2}} & \cdots\\
            &       \vdots                            &                                         &     \vdots                             &      \\
  \end{bmatrix}.
\end{equation*}
We remark that not all stencil types are used for all
discretizations. For example, the HDG method only uses stencil types 1
and 4, and EDG and CG use all 4 stencil types. We present an example
of the stencil notation \cref{defi-symbol-operator} for the HDG method
with $k=1$ in \cref{ap:example_hdg_k1}.

\subsection{Properties of the symbol of $K_h$}
\label{sub:invariant-space}

In this section we will determine the symbol of $K_h$.  Note that
$K_h$ is a block operator with $r^2$ blocks that are each
characterized by a stencil as defined in
\cref{defi-symbol-operator,defi-stencil-elements}. We will follow a
similar approach as presented in \cite{Boonen2008curl} to account for
$K_h$ acting on different groups of DOFs, see
Eq. \cref{eq:Lhpartitioning}. Our aim is to characterize the
eigenfunctions of $K_h$ in terms of the Fourier modes
\begin{equation}\label{eq:-single-Fourier-mode}
  \varphi_h(\boldsymbol{\theta}; \boldsymbol{x}) = e^{\iota
  \boldsymbol{\theta} \cdot \boldsymbol{x} /h},
\end{equation}
where $\iota^2=-1$.  Each of the operators $K_{\alpha_i\beta_j}$ is a
block Toeplitz operator with Toeplitz blocks, and as such their
eigenfunctions are given by \cref{eq:-single-Fourier-mode}, but we
need to determine how these eigenfunctions combine to make up
invariant subspaces of $K_h$, taking into account the different
degrees of freedom and the different grid locations. To account for
this we redefine the grid.

Let $\alpha \in \{N,X,Y,C\}$ and let there be $r_{\alpha}$ DOFs,
denoted by $\alpha_1, \hdots, \alpha_{r_{\alpha}}$, located at
location $\alpha$. We define $r_{\alpha}$ copies of
$\boldsymbol{G}_h^{\alpha}$, i.e.,
$\boldsymbol{G}^{\alpha_1}_h = \hdots =
\boldsymbol{G}^{\alpha_{r_{\alpha}}}_h = \boldsymbol{G}^{\alpha}_h$.
We then define the `extended' grid $\boldsymbol{\widetilde{G}}_h$ in
which we order the grid points as follows: all grid points in
$\boldsymbol{G}_h^{N_1}$ followed by the grid points in
$\boldsymbol{G}_h^{N_2},\hdots,\boldsymbol{G}_h^{N_{r_N}}$, then the
$r_X$ copies of the grid points in $\boldsymbol{G}_h^X$, the $r_Y$
copies of the grid points in $\boldsymbol{G}_h^Y$, and the $r_C$
copies of the grid points in $\boldsymbol{G}_h^C$ (note that this
ordering is consistent with the ordering of $K_{\alpha_i\beta_j}$ in
\cref{eq:block-matrix-to-operator}). We denote the definition of the
extended grid using the $\bigotimes$ symbol:
\begin{equation}
  \boldsymbol{\widetilde{G}}_h = \bigotimes_{\alpha \in\{N,X,Y,C\}, i\in\{ 1, 2,\cdots, r_{\alpha}\}}\boldsymbol{G}^{\alpha_i}_h.
\end{equation}
We note that $\boldsymbol{\widetilde{G}}_H$ is defined similarly.

For example, for HDG with $k=1$ we write
\begin{equation}
  \boldsymbol{\widetilde{G}}_h = \boldsymbol{G}_h^{X_1} \bigotimes \boldsymbol{G}_h^{X_2} \bigotimes
  \boldsymbol{G}_h^{Y_1} \bigotimes \boldsymbol{G}_h^{Y_2}.
\end{equation}

Consider now the application of the discrete operator $K_h$ acting on
a function $w_h(\boldsymbol{x})$ defined on
$\boldsymbol{\widetilde{G}}_h$.  Considering the restriction of the
grid function $K_hw_h(\boldsymbol{x})$ to the $\alpha_i$ grid, i.e.,
evaluating $K_hw_h(\boldsymbol{x})$ in
$\boldsymbol{x}^{\alpha_i} \in \boldsymbol{G}_h^{\alpha_i}$, we obtain
\begin{align}
    K_hw_h(\boldsymbol{x}^{\alpha_i})
    &=
    \sum_{\beta\in\{N,X,Y,C\}} \sum_{j=1}^{r_{\beta}} K_{\alpha_i\beta_j}w_h(\boldsymbol{x}^{\alpha_i})
    \label{eq:Khwxgalphaia}\\
    &=
    \sum_{\beta\in\{N,X,Y,C\}} \sum_{j=1}^{r_{\beta}}\sum_{\boldsymbol{\kappa}\in \boldsymbol{V}_{\alpha_i\beta_j}} s^{(\alpha_i,\beta_j)}_{\boldsymbol{\kappa}}
    w_h(\boldsymbol{x}^{\alpha_i}+\boldsymbol{\kappa} h). \label{eq:Khwxgalphai}
\end{align}
Note that, in \cref{eq:Khwxgalphai}, the grid function
$w_h(\boldsymbol{x})$ is evaluated on the $\beta_j$ grid, with
$\boldsymbol{\kappa}\in \boldsymbol{V}_{\alpha_i\beta_j}$ and
$\boldsymbol{x}^{\alpha_i}+\boldsymbol{\kappa} h \in
\boldsymbol{G}_h^{\beta_j}$.

In particular, if we take $w_h(\boldsymbol{x})$ to be the Fourier mode
$\varphi_h(\boldsymbol{\theta};\boldsymbol{x})$ from
\cref{eq:-single-Fourier-mode} defined on all grid points
$\boldsymbol{x} \in \boldsymbol{\widetilde{G}}_h$, we obtain, for the
grid function $K_h\varphi_h(\boldsymbol{\theta};\boldsymbol{x})$
evaluated on the $\alpha_i$ grid, i.e., in
$\boldsymbol{x}^{\alpha_i} \in \boldsymbol{G}_h^{\alpha_i}$,
\begin{equation}
  \label{eq:def-K-act-on-Fourier-mode}
  \begin{split}
    K_h\varphi_h(\boldsymbol{\theta};\boldsymbol{x}^{\alpha_i})
    &=
    \sum_{\beta\in\{N,X,Y,C\}} \sum_{j=1}^{r_{\beta}} K_{\alpha_i\beta_j}\varphi_h(\boldsymbol{\theta};\boldsymbol{x}^{\alpha_i})
    \\
    &=
    \sum_{\beta\in\{N,X,Y,C\}} \sum_{j=1}^{r_{\beta}}\sum_{\boldsymbol{\kappa}\in \boldsymbol{V}_{\alpha_i\beta_j}} s^{(\alpha_i,\beta_j)}_{\boldsymbol{\kappa}}
    \varphi_h(\boldsymbol{\theta};\boldsymbol{x}^{\alpha_i}+\boldsymbol{\kappa} h)
    \\
    &=
    \sum_{\beta\in\{N,X,Y,C\}} \sum_{j=1}^{r_{\beta}}\sum_{\boldsymbol{\kappa}\in \boldsymbol{V}_{\alpha_i\beta_j}} s^{(\alpha_i,\beta_j)}_{\boldsymbol{\kappa}}
    e^{\iota\boldsymbol{\theta}\cdot \kappa} \varphi_h(\boldsymbol{\theta};\boldsymbol{x}^{\alpha_i})
    \\
    &=
    \sum_{\beta\in\{N,X,Y,C\}} \sum_{j=1}^{r_{\beta}} \widetilde{K}_{\alpha_i\beta_j}(\boldsymbol{\theta}) \varphi_h(\boldsymbol{\theta};\boldsymbol{x}^{\alpha_i}),
  \end{split}
\end{equation}
where we define
\begin{equation}
  \label{eq:classical-symbol-definition}
  \widetilde{K}_{\alpha_i\beta_j}(\boldsymbol{\theta})
  :=
  \sum_{\boldsymbol{\kappa} \in \boldsymbol{V}_{\alpha_i\beta_j}}
  s^{(\alpha_i,\beta_j)}_{\boldsymbol{\kappa}} e^{\iota \boldsymbol{\theta}\cdot\boldsymbol{\kappa}}.
\end{equation}
The scalar $\widetilde{K}_{\alpha_i\beta_j}(\boldsymbol{\theta})$ in
\cref{eq:classical-symbol-definition} is called the symbol of the
operator block $K_{\alpha_i\beta_j}$, taking into account the offset
between the grids for DOF $\alpha_i$ and DOF $\beta_j$ as encoded in
the $\boldsymbol{\kappa} \in \boldsymbol{V}_{\alpha_i\beta_j}$.

Let us now define $r=r_N+r_X+r_Y+r_C$ grid functions $\Psi^{\alpha}_{j}(\boldsymbol{\theta}; \boldsymbol{x})$ on the grid
$\widetilde{\boldsymbol{G}}_h$ by 
\begin{equation}
  \label{eq:-four-basis}
   \Psi^{\alpha}_{j}(\boldsymbol{\theta}; \boldsymbol{x})
   :=
   \begin{cases}
     \varphi_h(\boldsymbol{\theta}; \boldsymbol{x})
     & \text{on the $\alpha_j$ grid, i.e., when } \boldsymbol{x}\in\boldsymbol{G}^{\alpha_j}_h,
     \\
     0
     & \text{on all other grids},
   \end{cases}
\end{equation}
and the $r$-dimensional function space
\begin{multline}
  \label{eq:basis-Fh}
  \mathcal{F}_h(\bm{\theta}) =\text{span}\big\{\Psi_1^{N}(\bm{\theta};\cdot),\hdots,\Psi_{r_N}^{N}(\bm{\theta};\cdot),
  \Psi_1^{X}(\bm{\theta};\cdot),\hdots,\Psi_{r_X}^{X}(\bm{\theta};\cdot),
  \\
  \Psi_1^{Y}(\bm{\theta};\cdot),\hdots,\Psi_{r_Y}^{Y}(\bm{\theta};\cdot),
  \Psi_1^{C}(\bm{\theta};\cdot),\hdots,\Psi_{r_C}^{C}(\bm{\theta};\cdot) \big\}.
\end{multline}
We will prove, see \cref{thm:invariant-space-Kh}, that
$\mathcal{F}_h(\bm{\theta})$ is an \emph{invariant} function space for
the operator $K_h$.  

This result is key to LFA, since LFA depends on computing error
reduction factors for different frequencies $\bm{\theta}$. Due to the
invariant function space property for a fixed value of $\bm{\theta}$,
this error reduction factor can be computed for each value of
$\bm{\theta}$ separately. We remark that $\mathcal{F}_h(\bm{\theta})$
is an extension of the invariant space for edge-based operators
introduced in \cite{Boonen2008curl}. We require the following
definitions and notation.

Let
$\widehat{\Psi}^{\alpha}
=[\Psi^{\alpha}_1,\cdots,\Psi^{\alpha}_{r_{\alpha}}]$,
$\alpha \in\{N,X,Y,C\}$ and
\begin{equation}\label{eq:Psi-four-block}
  \boldsymbol{\Psi}
  =
  \begin{bmatrix}
    \widehat{\Psi}^N&   \widehat{\Psi}^X   & \widehat{\Psi}^Y &\widehat{\Psi}^C
  \end{bmatrix},
\end{equation}
and note that any function
$\Phi(\boldsymbol{\theta}; \cdot) \in \mathcal{F}_h(\bm{\theta})$ is a
linear combination of the basis functions
$\Psi^{\alpha}_{j}(\boldsymbol{\theta}; \cdot)$:
\begin{equation}
  \label{eq:r-dimension-function}
  \Phi(\boldsymbol{\theta};\cdot)
  =
  \sum_{\alpha \in\{N,X,Y,C\}} \sum_{ j=1,\cdots, r_{\alpha}} \xi^{\alpha}_{j} \Psi^{\alpha}_{j}(\boldsymbol{\theta};\cdot),
  \qquad \xi^{\alpha}_{j} \in \mathbb{C}.
\end{equation}
Alternatively, we can write
\begin{equation*}
  \Phi(\boldsymbol{\theta};\cdot)
  = \boldsymbol{\Psi \xi},
  \quad
  \text{where}
  \quad
  \boldsymbol{\xi}
  =
  \begin{bmatrix}
    \xi^{N}\\
    \xi^{X}\\
    \xi^{Y}\\
    \xi^{C}
 \end{bmatrix},
 \quad
 \xi^{\alpha}
 =
 \begin{bmatrix}
   \xi^{\alpha}_1\\
   \xi^{\alpha}_2\\
   \vdots \\
   \xi^{\alpha}_{r_{\alpha}}
 \end{bmatrix},
 \quad
 \alpha \in\{N,X,Y,C\}.
\end{equation*}

We now show that $\mathcal{F}_h(\bm{\theta})$ is an invariant function
space for the operator $K_h$.

\begin{theorem}
  \label{thm:invariant-space-Kh}
  For any $\boldsymbol{\theta}$ in $\mathbb{R}^2$ and
  $\Phi(\boldsymbol{\theta}; \cdot) \in
  \mathcal{F}_h(\boldsymbol{\theta})$ it holds that
  \begin{equation}
    \label{eq:invariant-property}
    K_h \Phi(\boldsymbol{\theta}; \cdot) =K_h \boldsymbol{\Psi\xi}
    =\boldsymbol{\Psi} \widetilde{K}_h \boldsymbol{\xi} \in \mathcal{F}_h(\boldsymbol{\theta}),
  \end{equation}
  where the $r\times r$ complex matrix
  $$
  \widetilde{K}_h = (\widetilde{K}_{\alpha_i\beta_j})
  $$ is called the symbol of the operator $K_h$.
\end{theorem}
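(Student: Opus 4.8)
The plan is to exploit linearity together with the block structure of $K_h$, reducing everything to the action of $K_h$ on a single basis function $\Psi^{\beta}_j(\boldsymbol{\theta};\cdot)$. Since $\mathcal{F}_h(\boldsymbol{\theta})$ is spanned by the $\Psi^{\alpha}_i$ and $K_h$ is linear, it suffices to show that each basis function is mapped into $\mathcal{F}_h(\boldsymbol{\theta})$ and to read off the coefficients; invariance and the matrix form \cref{eq:invariant-property} then follow by assembling these coefficients into $\widetilde{K}_h$.

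First I would fix a source index $\beta_j$ and a target index $\alpha_i$ and evaluate $K_h\Psi^{\beta}_j$ on the grid $\boldsymbol{G}^{\alpha_i}_h$. Writing $K_h$ in the block form \cref{eq:block-matrix-to-operator} and using the stencil action \cref{defi-stencil-elements}, the value of $K_h\Psi^{\beta}_j$ at $\boldsymbol{x}^{\alpha_i}$ is a sum over all blocks $K_{\alpha_i\gamma_l}$, each of which samples the input only on the $\gamma_l$ grid. By the definition \cref{eq:-four-basis}, $\Psi^{\beta}_j$ vanishes on every grid except $\boldsymbol{G}^{\beta_j}_h$, so every block with $\gamma_l\neq\beta_j$ contributes zero and only the single block $K_{\alpha_i\beta_j}$ survives. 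This localization is the one place where care is needed, but it is immediate from the support of $\Psi^{\beta}_j$.

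Next I would evaluate the surviving block. On $\boldsymbol{G}^{\beta_j}_h$ the function $\Psi^{\beta}_j$ agrees with the Fourier mode $\varphi_h$ of \cref{eq:-single-Fourier-mode}, so applying \cref{defi-stencil-elements} together with the shift identity $\varphi_h(\boldsymbol{\theta};\boldsymbol{x}^{\alpha_i}+\boldsymbol{\kappa}h)=e^{\iota\boldsymbol{\theta}\cdot\boldsymbol{\kappa}}\varphi_h(\boldsymbol{\theta};\boldsymbol{x}^{\alpha_i})$ factors out the symbol $\widetilde{K}_{\alpha_i\beta_j}(\boldsymbol{\theta})$ from \cref{eq:classical-symbol-definition}, giving $K_h\Psi^{\beta}_j(\boldsymbol{x}^{\alpha_i})=\widetilde{K}_{\alpha_i\beta_j}(\boldsymbol{\theta})\,\varphi_h(\boldsymbol{\theta};\boldsymbol{x}^{\alpha_i})$. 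This is exactly the restriction of $\widetilde{K}_{\alpha_i\beta_j}(\boldsymbol{\theta})\Psi^{\alpha}_i$ to $\boldsymbol{G}^{\alpha_i}_h$; collecting over all target grids $\alpha_i$ yields $K_h\Psi^{\beta}_j=\sum_{\alpha}\sum_i\widetilde{K}_{\alpha_i\beta_j}(\boldsymbol{\theta})\Psi^{\alpha}_i\in\mathcal{F}_h(\boldsymbol{\theta})$. Summing this identity over all $\beta_j$ reproduces precisely the computation already recorded in \cref{eq:def-K-act-on-Fourier-mode}, which serves as a consistency check.

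Finally, invoking linearity on a general $\Phi=\boldsymbol{\Psi}\boldsymbol{\xi}=\sum_{\beta,j}\xi^{\beta}_j\Psi^{\beta}_j$ and interchanging the order of summation, the coefficient multiplying each $\Psi^{\alpha}_i$ is $\sum_{\beta,j}\widetilde{K}_{\alpha_i\beta_j}(\boldsymbol{\theta})\xi^{\beta}_j$, which is exactly the $\alpha_i$ entry of $\widetilde{K}_h\boldsymbol{\xi}$ with $\widetilde{K}_h=(\widetilde{K}_{\alpha_i\beta_j})$. Hence $K_h\Phi=\boldsymbol{\Psi}\widetilde{K}_h\boldsymbol{\xi}$, establishing both invariance and the claimed matrix representation. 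The argument is essentially bookkeeping once the localization and shift identities are in hand; the only real obstacle is keeping the multi-index ordering of \cref{eq:block-matrix-to-operator} and \cref{eq:Psi-four-block} consistent throughout.
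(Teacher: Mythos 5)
Your proposal is correct and takes essentially the same approach as the paper's proof: localization to the single surviving block $K_{\alpha_i\beta_j}$ via the support property \cref{eq:-four-basis}, the Fourier shift identity to factor out the symbol \cref{eq:classical-symbol-definition}, and reassembly over all target grids using the disjoint supports of the $\Psi^{\alpha}_i$. The only difference is the order of bookkeeping---you compute $K_h\Psi^{\beta}_j$ first and then invoke linearity, while the paper expands $\Phi$ by linearity first---which is immaterial.
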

\begin{proof}
  By
  \cref{eq:r-dimension-function},
  \begin{equation}\label{eq:Kh-acts-alphai-betaj}
    \begin{split}
      K_h\Phi(\boldsymbol{\theta}; \boldsymbol{x})
      &=
      K_h \del[2]{ \sum_{\beta \in\{N,X,Y,C\}}\sum_{j=1,\cdots, r_{\beta}} \xi^{\beta}_{j}
        \Psi^{\beta}_{j}(\boldsymbol{\theta}; \boldsymbol{x}) }
      \\
      &=
      \sum_{\beta \in\{N,X,Y,C\}}\sum_{ j=1,\cdots, r_{\beta}}\xi^{\beta}_{j}K_h
      \Psi^{\beta}_{j}(\boldsymbol{\theta}; \boldsymbol{x}).
    \end{split}
  \end{equation}
  Considering the restriction of the grid function
  $K_h\Phi(\boldsymbol{\theta}; \boldsymbol{x})$ to the $\alpha_i$
  grid, i.e., evaluating
  $K_h\Phi(\boldsymbol{\theta}; \boldsymbol{x})$ in
  $\boldsymbol{x}^{\alpha_i} \in \boldsymbol{G}_h^{\alpha_i}$, we
  obtain, using \cref{eq:Khwxgalphaia}, \cref{eq:Khwxgalphai} and
  \cref{eq:def-K-act-on-Fourier-mode},
  \begin{equation*}
    \begin{split}
      K_h \Psi^{\beta}_{j}(\boldsymbol{\theta}; \boldsymbol{x}^{\alpha_i})
      &=
      \displaystyle\sum_{\gamma \in\{N,X,Y,C\}} \sum_{k=1}^{r_{\gamma}}\big( K_{\alpha_i\gamma_k}\Psi^{\beta}_{j}(\boldsymbol{\theta}; \boldsymbol{x}^{\alpha_i})\big)
      \\
      &=
      \displaystyle\sum_{\gamma \in\{N,X,Y,C\}} \sum_{k=1}^{r_{\gamma}}\big(\sum_{\boldsymbol{\kappa} \in \boldsymbol{V}_{\alpha_i\beta_j}}s_{\kappa}^{(\alpha_i,\gamma_k)}
      \Psi^{\beta}_{j}(\boldsymbol{\theta}; \boldsymbol{x}^{\alpha_i}+\boldsymbol{\kappa} h)\big)
      \\
      &=
      \sum_{\boldsymbol{\kappa}}s_{\boldsymbol{\kappa}}^{(\alpha_i,\beta_j)}\varphi_h(\boldsymbol{\theta};\boldsymbol{x}^{\alpha_i}+\boldsymbol{\kappa}h)
      \\
      &= \sum_{\boldsymbol{\kappa} \in \boldsymbol{V}_{\alpha_i\beta_j}}s^{(\alpha_i,\beta_j)}_{\boldsymbol{\kappa}} e^{\iota \boldsymbol{\theta}\cdot\boldsymbol{\kappa}}
      \varphi_h(\boldsymbol{\theta}; \boldsymbol{x}^{\alpha_i})
      \\
      &=\widetilde{K}_{\alpha_i\beta_j}\varphi_h(\boldsymbol{\theta};\boldsymbol{x}^{\alpha_i})
      \\
      &=\widetilde{K}_{\alpha_i\beta_j}\Psi^{\alpha}_{i}(\boldsymbol{\theta}; \boldsymbol{x}^{\alpha_i}).
    \end{split}
  \end{equation*}
  It then follows from \cref{eq:Kh-acts-alphai-betaj}, on grid
  $\alpha_i$, that
  \begin{equation}\label{eq:K-acts-on-alphai-point}
    \begin{split}
      K_h\Phi(\boldsymbol{\theta}; \boldsymbol{x}^{\alpha_i})
      &=
      \sum_{\beta \in\{N,X,Y,C\}} \sum_{j=1,\cdots,r_{\beta}} \xi^{\beta}_{j}
      \widetilde{K}_{\alpha_i\beta_j}\Psi^{\alpha}_{i}(\boldsymbol{\theta}; \boldsymbol{x}^{\alpha_i}).
    \end{split}
  \end{equation}
  Since $\alpha\in\{N,X,Y,C\}$ and $i\in\{1,2,\cdots,r_{\alpha}\}$ are
  arbitrary, \cref{eq:K-acts-on-alphai-point} implies
  \cref{eq:invariant-property} as follows.  Considering
  $K_h\Phi(\boldsymbol{\theta}; \boldsymbol{x})$ for a generic
  $\boldsymbol{x}\in \widetilde{\boldsymbol{G}}_h$ on any of the $r$
  subgrids, we can sum the right hand side of
  \cref{eq:K-acts-on-alphai-point} over all grids to obtain
  \begin{equation*}
    \begin{split}
      K_h\Phi(\boldsymbol{\theta}; \boldsymbol{x})
      &=
      \sum_{\alpha \in\{N,X,Y,C\}} \sum_{ i=1,\cdots, r_{\alpha}} \Psi^{\alpha}_{i}(\boldsymbol{\theta};\boldsymbol{x}) \sum_{\beta \in\{N,X,Y,C\}} \sum_{j=1,\cdots,r_{\beta}} \xi^{\beta}_{j}
      \widetilde{K}_{\alpha_i\beta_j}
      \\
      &= \boldsymbol{\Psi} \widetilde{K}_h \boldsymbol{\xi},
    \end{split}
  \end{equation*}
  using the fact that the
  $\Psi^{\alpha}_{i}(\boldsymbol{\theta};\boldsymbol{x})$ have zero
  overlap in the sum due to \cref{eq:-four-basis}.
\end{proof}

We now consider frequency aliasing and generalize \cite[Theorem
3.2]{Boonen2008curl}.

\begin{theorem}
  \label{thm:2pi-equal-property}
  Let
  $\boldsymbol{\eta} = (\eta_1,\eta_2) \in
  \cbr[0]{(0,0),(1,0),(0,1),(1,1)}$. For any
  $\boldsymbol{\theta} \in \mathbb{R}^2$,
  $\boldsymbol{x} \in \widetilde{\boldsymbol{G}}_h$, and
  $\Phi(\boldsymbol{\theta};\boldsymbol{x})$ in
  $\mathcal{F}_h(\boldsymbol{\theta})$, we have that
  \begin{equation}
    \label{distiguish-Lh-2pi}
    \Phi(\boldsymbol{\theta}+2\pi\boldsymbol{\eta}; \boldsymbol{x})
    =
    \Phi(\boldsymbol{\theta}; \boldsymbol{x}) \times
    \begin{cases}
      1
      & \text{if } \boldsymbol{x} \in \bigotimes_{i\in\{1,\cdots,r_N\}}\boldsymbol{G}^{N_i}_h,
      \\
      (-1)^{\eta_1}
      & \text{if } \boldsymbol{x}\in \bigotimes_{i\in\{1,\cdots,r_X\}}\boldsymbol{G}^{X_i}_h,
      \\
      (-1)^{\eta_2}
      & \text{if } \boldsymbol{x}\in \bigotimes_{i\in\{1,\cdots,r_Y\}}\boldsymbol{G}^{Y_i}_h,
      \\
      (-1)^{\eta_1+\eta_2}
      & \text{if } \boldsymbol{x}\in \bigotimes_{i\in\{1,\cdots,r_C\}}\boldsymbol{G}^{C_i}_h.
    \end{cases}
  \end{equation}
\end{theorem}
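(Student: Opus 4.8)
The plan is to reduce the claim for a general $\Phi(\boldsymbol{\theta};\cdot)\in\mathcal{F}_h(\boldsymbol{\theta})$ to a one-line phase computation on the single Fourier mode $\varphi_h$, using the fact that each basis function $\Psi^{\alpha}_j$ from \cref{eq:-four-basis} is supported on exactly one subgrid type. First I would fix a point $\boldsymbol{x}\in\widetilde{\boldsymbol{G}}_h$; by construction it lies on precisely one subgrid $\boldsymbol{G}_h^{\gamma_k}$. By \cref{eq:-four-basis} every $\Psi^{\alpha}_j(\boldsymbol{\theta};\boldsymbol{x})$ with $(\alpha,j)\neq(\gamma,k)$ vanishes at $\boldsymbol{x}$, so the expansion \cref{eq:r-dimension-function} collapses there to $\Phi(\boldsymbol{\theta};\boldsymbol{x})=\xi^{\gamma}_k\,\varphi_h(\boldsymbol{\theta};\boldsymbol{x})$, and likewise $\Phi(\boldsymbol{\theta}+2\pi\boldsymbol{\eta};\boldsymbol{x})=\xi^{\gamma}_k\,\varphi_h(\boldsymbol{\theta}+2\pi\boldsymbol{\eta};\boldsymbol{x})$ with the \emph{same} coefficient $\xi^{\gamma}_k$. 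Hence it suffices to compute the aliasing factor for the pure mode $\varphi_h$ at a point of each type $\gamma\in\{N,X,Y,C\}$.

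Next, directly from the definition \cref{eq:-single-Fourier-mode} of the Fourier mode,
\[
  \varphi_h(\boldsymbol{\theta}+2\pi\boldsymbol{\eta};\boldsymbol{x})
  = e^{\iota(\boldsymbol{\theta}+2\pi\boldsymbol{\eta})\cdot\boldsymbol{x}/h}
  = \varphi_h(\boldsymbol{\theta};\boldsymbol{x})\,e^{2\pi\iota\,\boldsymbol{\eta}\cdot\boldsymbol{x}/h},
\]
so the whole statement reduces to evaluating the phase $e^{2\pi\iota\,\boldsymbol{\eta}\cdot\boldsymbol{x}/h}$. Using the grid parametrization \cref{four-mesh-types}, I would write $\boldsymbol{x}=(k_1,k_2)h+\sigma^{\gamma}$ with $(k_1,k_2)\in\mathbb{Z}^2$, giving $\boldsymbol{\eta}\cdot\boldsymbol{x}/h=(\eta_1 k_1+\eta_2 k_2)+\boldsymbol{\eta}\cdot\sigma^{\gamma}/h$. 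Since $\eta_1 k_1+\eta_2 k_2\in\mathbb{Z}$, the lattice contribution satisfies $e^{2\pi\iota(\eta_1 k_1+\eta_2 k_2)}=1$, independently of $(k_1,k_2)$; this is precisely what makes the factor depend only on the grid type $\gamma$ and not on the individual point or its copy in $\widetilde{\boldsymbol{G}}_h$.

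To finish I would substitute the four offsets from \cref{four-mesh-types}: for $\gamma=N$ one has $\sigma^{N}=(0,0)$ and the factor is $1$; for $\gamma=X$, $\sigma^{X}=(h/2,0)$ yields $e^{\iota\pi\eta_1}=(-1)^{\eta_1}$; for $\gamma=Y$, $\sigma^{Y}=(0,h/2)$ yields $(-1)^{\eta_2}$; and for $\gamma=C$, $\sigma^{C}=(h/2,h/2)$ yields $e^{\iota\pi(\eta_1+\eta_2)}=(-1)^{\eta_1+\eta_2}$. Collecting the four cases reproduces exactly \cref{distiguish-Lh-2pi}. There is no genuine obstacle here: the argument is essentially a single phase computation, and the only point needing care is the observation that the half-integer offsets $\sigma^{\gamma}/h\in\{0,\tfrac12\}^2$ are what convert the $2\pi$ shift into the stated signs, while the integer lattice coordinates drop out. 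This also makes transparent why the result generalizes \cite{Boonen2008curl}: the $X$- and $Y$-type factors coincide with those of the edge-based setting treated there, and the $N$- and $C$-type factors arise identically from their respective offsets.
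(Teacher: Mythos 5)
Your proposal is correct and follows essentially the same route as the paper's proof: both reduce the claim for a general $\Phi\in\mathcal{F}_h(\boldsymbol{\theta})$ to the aliasing identity for the single mode $\varphi_h$ on each subgrid and then compute the phase shift. The only difference is cosmetic: the paper verifies the $N$-type case explicitly and states that the remaining cases follow similarly, whereas you treat all four cases at once by splitting $\boldsymbol{x}=(k_1,k_2)h+\sigma^{\gamma}$ and reading the sign off the offset $\sigma^{\gamma}/h\in\{0,\tfrac12\}^2$, which is a slightly tidier organization of the same computation.
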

\begin{proof}
  By \cref{eq:r-dimension-function} and \cref{eq:-four-basis} we need
  to verify that
  \begin{subequations}
    \begin{align}
      \label{eq:varphihGNi_a}
      &\varphi_h(\boldsymbol{\theta}+2\pi{\boldsymbol{\eta}}; \boldsymbol{x})
        =\varphi_h(\boldsymbol{\theta};\boldsymbol{x}),
      && \text{if}\quad \boldsymbol{x} \in \bigotimes_{i=1,\cdots,r_N}\boldsymbol{G}_h^{N_i},
      \\
      \label{eq:varphihGNi_b}
      &\varphi_h(\boldsymbol{\theta}+2\pi{\boldsymbol{\eta}};\boldsymbol{x})
        =(-1)^{\eta_1}\varphi_h(\boldsymbol{\theta};\boldsymbol{x}),
      && \text{if}\quad \boldsymbol{x} \in \bigotimes_{i=1,\cdots,r_X}\boldsymbol{G}_h^{X_i},
      \\
      \label{eq:varphihGNi_c}
      &\varphi_h(\boldsymbol{\theta}+2\pi{\boldsymbol{\eta}};\boldsymbol{x})
        =(-1)^{\eta_2}\varphi_h(\boldsymbol{\theta};\boldsymbol{x}),
      && \text{if}\quad \boldsymbol{x} \in \bigotimes_{i=1,\cdots,r_Y}\boldsymbol{G}_h^{Y_i},
      \\
      \label{eq:varphihGNi_d}
      &\varphi_h(\boldsymbol{\theta}+2\pi{\boldsymbol{\eta}};\boldsymbol{x})
        =(-1)^{\eta_1+\eta_2}\varphi_h(\boldsymbol{\theta};\boldsymbol{x}),
      && \text{if}\quad \boldsymbol{x} \in \bigotimes_{i=1,\cdots,r_C}\boldsymbol{G}_h^{C_i}.
    \end{align}
  \end{subequations}
  We show \cref{eq:varphihGNi_a}. If
  $\boldsymbol{x} \in \boldsymbol{G}_h^{N}$ then
  $\boldsymbol{x} = (ih, jh)$ with $i,j\in\mathbb{Z}$. Then
  \begin{equation*}
    \varphi_h(\boldsymbol{\theta} + 2\pi\boldsymbol{\eta}; \boldsymbol{x})
    =
    e^{\iota (\theta_1 + 2\pi\eta_1)i} e^{\iota (\theta_2 + 2\pi\eta_2) j}
    =
    \varphi_h(\boldsymbol{\theta}; \boldsymbol{x})
    e^{\iota 2\pi\eta_1 i} e^{\iota 2\pi\eta_2 j}
    =
    \varphi_h(\boldsymbol{\theta}; \boldsymbol{x}).
  \end{equation*}
  \Cref{eq:varphihGNi_b,eq:varphihGNi_c,eq:varphihGNi_d} follow
  similarly.
\end{proof}

Due to the frequency aliasing as shown by
\cref{thm:2pi-equal-property}, it is sufficient to consider
$\boldsymbol{\theta} = (\theta_1,\theta_2) \in [-\pi/2, 3\pi/2)^2$ (or
any pair of intervals with length $2\pi$) in the LFA analysis of our
multigrid methods.

\subsection{Two-grid LFA}

We now determine the symbol of the two-grid error propagation operator
\cref{TG-error-form}. In order to apply LFA to the two-grid error
propagation operator we need to determine how the operators $K_h$,
$P_H^h$, $R_h^H$, $S_h$, and $K_H$ act on the Fourier components
$\Phi(\boldsymbol{\theta};\cdot)$ in
$\mathcal{F}_h(\boldsymbol{\theta})$. Note that the Fourier modes on
the coarse grid $\boldsymbol{\widetilde{G}}_H$ are given by
\begin{equation}
  \label{eq:r-dimension-function-H}
  \Phi_H(\boldsymbol{\theta};\cdot)
  =
  \sum_{\alpha \in\{N,X,Y,C\}} \sum_{ j=1,\cdots, r_{\alpha}} \xi^{\alpha}_{j}
  \bar{\Psi}^{\alpha}_{j}(\boldsymbol{\theta};\cdot),
\end{equation}
where
\begin{equation}
  \label{eq:-four-basis-H}
  \bar{\Psi}^{\alpha}_{j}(\boldsymbol{\theta}; \boldsymbol{x})
  =
  \begin{cases}
    \varphi_H(\boldsymbol{\theta}; \boldsymbol{x})
    & \text{if } \boldsymbol{x}\in\boldsymbol{G}^{\alpha_j}_H,
    \\
    0
    & \text{otherwise}.
  \end{cases}
\end{equation}

We have the following properties of Fourier modes on
$\boldsymbol{\widehat{G}}_H$.

\begin{theorem}
  \label{thm:alising-property}
  Let
  $\boldsymbol{\eta} = (\eta_1,\eta_2) \in
  \cbr[0]{(0,0),(1,0),(0,1),(1,1)}$.  For any
  $\boldsymbol{\theta} \in \mathbb{R}^2$,
  $\boldsymbol{x} \in \boldsymbol{\widehat{G}}_H$, and
  $\Phi(\boldsymbol{\theta};\boldsymbol{x})$ in
  $\mathcal{F}_h(\boldsymbol{\theta})$, we have that
  \begin{equation}
    \label{distiguish-LH}
    \Phi(\boldsymbol{\theta} + \pi\boldsymbol{\eta}; \boldsymbol{x})
    =
    \Phi(\boldsymbol{\theta}; \boldsymbol{x}) \times
    \begin{cases}
      1
      & \text{if } \boldsymbol{x}\in \bigotimes_{i\in\{1,\cdots,r_N\}}\boldsymbol{G}^{N_i}_H,
      \\
      (-1)^{\eta_1}
      & \text{if } \boldsymbol{x}\in \bigotimes_{i\in\{1,\cdots,r_X\}}\boldsymbol{G}^{X_i}_H,
      \\
      (-1)^{\eta_2}
      & \text{if } \boldsymbol{x}\in \bigotimes_{i\in\{1,\cdots,r_Y\}}\boldsymbol{G}^{Y_i}_H,
      \\
      (-1)^{\eta_1+\eta_2}
      & \text{if } \boldsymbol{x}\in \bigotimes_{i\in\{1,\cdots,r_C\}}\boldsymbol{G}^{C_i}_H.
    \end{cases}
  \end{equation}

  Furthermore, for $\boldsymbol{x} \in\widehat{\boldsymbol{G}}_H$ and
  $\boldsymbol{\theta}\in[-\pi/2, \pi/2)^2$ we have
  \begin{equation}
    \label{coinside-h-H-modes}
    \Phi(\boldsymbol{\theta} + \pi\boldsymbol{\eta}; \boldsymbol{x})
    =
    \Phi_{H}(2\boldsymbol{\theta}; \boldsymbol{x}) \times
    \begin{cases}
      1
      & \text{if } \boldsymbol{x}\in \bigotimes_{i\in\{1,\cdots,r_N\}}\boldsymbol{G}^{N_i}_H,
      \\
      (-1)^{\eta_1}
      & \text{if } \boldsymbol{x}\in \bigotimes_{i\in\{1,\cdots,r_X\}}\boldsymbol{G}^{X_i}_H,
      \\
      (-1)^{\eta_2}
      & \text{if } \boldsymbol{x}\in \bigotimes_{i\in\{1,\cdots,r_Y\}}\boldsymbol{G}^{Y_i}_H,
      \\
      (-1)^{\eta_1+\eta_2}
      & \text{if } \boldsymbol{x}\in \bigotimes_{i\in\{1,\cdots,r_C\}}\boldsymbol{G}^{C_i}_H.
    \end{cases}
  \end{equation}
\end{theorem}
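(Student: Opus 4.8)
The plan is to reduce both identities, \cref{distiguish-LH} and \cref{coinside-h-H-modes}, to scalar statements about the single Fourier mode $\varphi_h$ of \cref{eq:-single-Fourier-mode}, and then to lift them back through the coefficients $\xi^{\alpha}_j$, in exactly the manner of the proofs of \cref{thm:invariant-space-Kh} and \cref{thm:2pi-equal-property}. Indeed, by \cref{eq:r-dimension-function} any $\Phi(\boldsymbol{\theta};\cdot)\in\mathcal{F}_h(\boldsymbol{\theta})$ is the fixed linear combination $\sum_{\alpha,j}\xi^{\alpha}_j\Psi^{\alpha}_j(\boldsymbol{\theta};\cdot)$ of basis functions with mutually disjoint supports, while by \cref{eq:r-dimension-function-H} the coarse function $\Phi_H(2\boldsymbol{\theta};\cdot)$ is built from the \emph{same} coefficients $\xi^{\alpha}_j$ and the coarse basis functions $\bar{\Psi}^{\alpha}_j(2\boldsymbol{\theta};\cdot)$ of \cref{eq:-four-basis-H}. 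Evaluating at a point of a single coarse subgrid $\boldsymbol{G}^{\alpha_i}_H$ therefore kills every term except the $\alpha_i$ term on each side, so it suffices to establish the claimed relations for $\varphi_h$ restricted to $\boldsymbol{G}^{\alpha_i}_H$, one DOF type $\alpha\in\{N,X,Y,C\}$ at a time; the coefficients then pass through untouched.

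First I would prove \cref{distiguish-LH}. Writing $\varphi_h(\boldsymbol{\theta}+\pi\boldsymbol{\eta};\boldsymbol{x}) = \varphi_h(\boldsymbol{\theta};\boldsymbol{x})\,e^{\iota\pi(\eta_1 x_1+\eta_2 x_2)/h}$, everything reduces to the value of the phase $e^{\iota\pi(\eta_1 x_1+\eta_2 x_2)/h}$ at a coarse point $\boldsymbol{x}\in\boldsymbol{G}^{\alpha}_H$. Since $H=2h$, the half-step offsets $\sigma^{\alpha}$ defining the coarse subgrids become integer multiples of $h$, so $\boldsymbol{x}/h$ has coordinates of a definite parity fixed by the type: $(\mathrm{even},\mathrm{even})$ for $\alpha=N$, $(\mathrm{odd},\mathrm{even})$ for $\alpha=X$, $(\mathrm{even},\mathrm{odd})$ for $\alpha=Y$, and $(\mathrm{odd},\mathrm{odd})$ for $\alpha=C$. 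Using $e^{\iota\pi\cdot\mathrm{even}}=1$, $e^{\iota\pi\cdot\mathrm{odd}}=-1$, and $\eta_i\in\{0,1\}$, the phase collapses to $1$, $(-1)^{\eta_1}$, $(-1)^{\eta_2}$, and $(-1)^{\eta_1+\eta_2}$, which are precisely the four factors in \cref{distiguish-LH}. This is the coarse-grid counterpart of \cref{thm:2pi-equal-property}: here a shift of only $\pi$ already yields clean $\pm1$ factors, because on the coarse grid the coordinates are integer multiples of $h$, whereas on the full fine grid the $X$-, $Y$-, and $C$-type coordinates are half-integers and a shift of $2\pi$ was needed.

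For \cref{coinside-h-H-modes} I would use one additional ingredient, the scaling identity $\varphi_H(2\boldsymbol{\theta};\boldsymbol{x}) = e^{\iota 2\boldsymbol{\theta}\cdot\boldsymbol{x}/H} = e^{\iota\boldsymbol{\theta}\cdot\boldsymbol{x}/h} = \varphi_h(\boldsymbol{\theta};\boldsymbol{x})$, which holds at every $\boldsymbol{x}$ precisely because $H=2h$. Substituting $\varphi_h(\boldsymbol{\theta};\boldsymbol{x})=\varphi_H(2\boldsymbol{\theta};\boldsymbol{x})$ into the scalar identity just obtained converts $\varphi_h(\boldsymbol{\theta}+\pi\boldsymbol{\eta};\boldsymbol{x}) = (\text{factor})\,\varphi_h(\boldsymbol{\theta};\boldsymbol{x})$ into $\varphi_h(\boldsymbol{\theta}+\pi\boldsymbol{\eta};\boldsymbol{x}) = (\text{factor})\,\varphi_H(2\boldsymbol{\theta};\boldsymbol{x})$ on each coarse subgrid $\boldsymbol{G}^{\alpha_i}_H$; lifting back through the shared coefficients $\xi^{\alpha}_i$ then gives \cref{coinside-h-H-modes}. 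The hypothesis $\boldsymbol{\theta}\in[-\pi/2,\pi/2)^2$ enters here only to make the statement non-degenerate: it guarantees that the four shifted frequencies $\boldsymbol{\theta}+\pi\boldsymbol{\eta}$ are distinct and exhaust a full $2\pi$-cell while $2\boldsymbol{\theta}$ ranges over a single period of the coarse modes, so that \cref{coinside-h-H-modes} captures exactly the aliasing of the fine harmonics onto each coarse mode.

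I expect the genuine difficulty to lie not in any one computation but in the bookkeeping of the type-versus-location correspondence between the fine and coarse extended grids. One must keep straight that a coarse DOF of type $\alpha$ retains its type label $\alpha$ (so that the matching coefficient $\xi^{\alpha}_i$ multiplies corresponding nonzero entries of $\Phi$ and $\Phi_H$), while the value of $\varphi_h$ at the coarse point is governed by that point's \emph{physical} location, whose coordinate parity supplies the sign factor. Conflating these two roles is the easiest way to obtain the wrong factors. This step is vacuous in the classical one-DOF-per-node LFA, and was treated for horizontal and vertical edges in \cite{Boonen2008curl}; the new content here is that it must be carried out consistently and simultaneously for all four offset patterns $\sigma^{N},\sigma^{X},\sigma^{Y},\sigma^{C}$, i.e. for nodes, both edge orientations, and cell centers at once.
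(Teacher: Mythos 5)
Your proof is correct and is essentially the argument the paper intends: the paper omits the proof of \cref{thm:alising-property} as ``similar to the proof of \cref{thm:2pi-equal-property}'', and your parity computation of the phase $e^{\iota\pi(\eta_1 x_1+\eta_2 x_2)/h}$ at coarse-grid points (where the offset $H/2=h$ makes all coordinates integer multiples of $h$, with the type-dependent parities you list) is exactly that analogous computation, correctly reduced to the scalar mode $\varphi_h$ via the disjoint supports of the basis functions. Your scaling identity $\varphi_H(2\boldsymbol{\theta};\boldsymbol{x})=\varphi_h(\boldsymbol{\theta};\boldsymbol{x})$, valid since $H=2h$, is precisely the extra ingredient needed for \cref{coinside-h-H-modes}, and your caveat about distinguishing a coarse DOF's type label from the parity of its physical location is the right point of care.
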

\begin{proof}
  We omit the proof since it is similar to the proof of
  \cref{thm:2pi-equal-property}.
\end{proof}

We denote the high and low frequency intervals as
\begin{equation*}
  T^{\text{low}}
  = [-\tfrac{\pi}{2}, \tfrac{\pi}{2})^{2},
  \qquad
  T^{\text{high}}
  = [-\tfrac{\pi}{2}, \tfrac{3\pi}{2})^{2}
  \backslash
  [-\tfrac{\pi}{2},\tfrac{\pi}{2})^{2}.
\end{equation*}
Aliasing of modes will occur in the intergrid transfer operations (see
\cite{Trottenberg:book, Wienands:2005}). For this reason we introduce
the $4r$-dimensional harmonic space
$\mathfrak{F}_h(\boldsymbol{\theta})$, with
$\boldsymbol{\theta}\in T^{\rm low}$, as
\begin{equation*}
  \mathfrak{F}_h(\boldsymbol{\theta})
  =\text{span}\cbr[1]{\boldsymbol{\psi}(\boldsymbol{\theta^{(\eta)}} ; \cdot) :
    \boldsymbol{\psi} \in \mathcal{F}_h(\boldsymbol{\theta}), \,
    \boldsymbol{\eta} = (\eta_1, \eta_2) \in \cbr[0]{(0,0),(1,0),(0,1),(1,1)} },
\end{equation*}
where
$\boldsymbol{\theta}^{(\boldsymbol{\eta})}=\boldsymbol{\theta}^{(\eta_1,\eta_2)}:=(\theta_1+\pi
\eta_1,\theta_2+\pi \eta_2) = \boldsymbol{\theta} + \pi
\boldsymbol{\eta}$ and
$\boldsymbol{\theta} = \boldsymbol{\theta}^{(0,0)} \in T^{{\rm
    low}}$. Note that every function
$\boldsymbol{\psi} \in \mathfrak{F}_h(\boldsymbol{\theta})$ can be
written as follows, using the $\boldsymbol{\Psi}$ matrix defined in
\cref{eq:Psi-four-block}:
\begin{equation}
  \label{eq:-two-grid-basis-expansion}
  \boldsymbol{\psi}
  = \boldsymbol{\Psi}(\boldsymbol{\theta}^{(0,0)};\cdot)\,\boldsymbol{\xi}^{(0,0)}
  + \boldsymbol{\Psi}(\boldsymbol{\theta}^{(1,0)};\cdot)\,\boldsymbol{\xi}^{(1,0)}
  + \boldsymbol{\Psi}(\boldsymbol{\theta}^{(0,1)};\cdot)\,\boldsymbol{\xi}^{(0,1)}
  + \boldsymbol{\Psi}(\boldsymbol{\theta}^{(1,1)};\cdot)\,\boldsymbol{\xi}^{(1,1)},
\end{equation}
where the $\boldsymbol{\xi}^{\boldsymbol{\eta}}\in\mathbb{C}^{r\times 1}$
are uniquely determined.

We remark that $\mathfrak{F}_h(\boldsymbol \theta)$ is invariant for
the two-grid error operator (we refer to \cite[Section
4.4]{Trottenberg:book} for a discussion on how to prove this for
node-based problems), i.e., for any $\boldsymbol{\psi}$ in
\cref{eq:-two-grid-basis-expansion},
\begin{equation*}
  \begin{split}
    E_h \boldsymbol{\psi}
    &= E_h \sbr[1]{\boldsymbol{\Psi}(\boldsymbol{\theta}^{(0,0)};\cdot) \
      \boldsymbol{\Psi}(\boldsymbol{\theta}^{(1,0)};\cdot) \
      \boldsymbol{\Psi}(\boldsymbol{\theta}^{(0,1)};\cdot) \
      \boldsymbol{\Psi}(\bm{\theta}^{(1,1)};\cdot) } \widehat{\boldsymbol{\xi}}
    \\
    &= \sbr[1]{\boldsymbol{\Psi}(\boldsymbol{\theta}^{(0,0)};\cdot) \
      \boldsymbol{\Psi}(\boldsymbol{\theta}^{(1,0)};\cdot) \
      \boldsymbol{\Psi}(\boldsymbol{\theta}^{(0,1)};\cdot) \
      \boldsymbol{\Psi}(\boldsymbol{\theta}^{(1,1)};\cdot)} \widetilde{\boldsymbol{E}}_h \boldsymbol{\widehat{\xi}},
  \end{split}
\end{equation*}
where
$\widehat{\boldsymbol{\xi}}=\sbr[0]{\boldsymbol{\xi}^{(0,0)} \
  \boldsymbol{\xi}^{(1,0)} \ \boldsymbol{\xi}^{(0,1)} \
  \boldsymbol{\xi}^{(0,0)}}^T \in \mathbb{C}^{4r\times 1}$, and
$\widetilde{\boldsymbol{E}}_h$ is the $4r \times 4r$ matrix which is
the LFA representation of the two-grid operator $E_h$ given by
\begin{equation}
  \label{eq:symbolEh}
  \widetilde{ \boldsymbol{E}}_h(\boldsymbol{\theta},\omega)
  = \widetilde{\boldsymbol {S}}^{\nu_2}_h(\boldsymbol{\theta},\omega)
  \del{I-\widetilde{\boldsymbol {P}}_H^h(\boldsymbol{\theta})
    (\widetilde{K}_{H}(2\boldsymbol{ \theta}))^{-1}
    \widetilde{\boldsymbol{ R}}_h^H(\boldsymbol{\theta})
    \widetilde{\boldsymbol{ \mathcal{K}}}_{h}(\boldsymbol{\theta})}
  \widetilde{\boldsymbol{ S}}^{\nu_1}_h(\boldsymbol{\theta},\omega),
\end{equation}
where
\begin{eqnarray*}
  \widetilde{\boldsymbol{\mathcal{K}}}_h(\boldsymbol{\theta})
  &=&\text{diag}\cbr[1]{\widetilde{K}_h(\boldsymbol{\theta}^{(0,0)}),
      \widetilde{K}_h(\boldsymbol{\theta}^{(1,0)}),\widetilde{K}_h(\boldsymbol{\theta}^{(0,1)}),
      \widetilde{K}_h(\boldsymbol{\theta}^{(1,1)}) },
  \\
  \widetilde{\boldsymbol{S}}_h(\boldsymbol{\theta},\omega)
  &=&\text{diag}\cbr[1]{\widetilde{S}_h(\boldsymbol{\theta}^{(0,0)},\omega),
      \widetilde{S}_h(\boldsymbol{\theta}^{(1,0)},\omega),\widetilde{S}_h(\boldsymbol{\theta}^{(0,1)},\omega),
      \widetilde{S}_h(\boldsymbol{\theta}^{(1,1)},\omega) },
  \\
  \widetilde{\boldsymbol{R}}_h^H(\boldsymbol{\theta})
  &=&\del[1]{ \widetilde{R}_h^H(\boldsymbol{\theta}^{(0,0)}), \widetilde{R}_h^H(\boldsymbol{\theta}^{(1,0)}),
      \widetilde{R}_h^H(\boldsymbol{\theta}^{(0,1)}), \widetilde{R}_h^H(\boldsymbol{\theta}^{(1,1)}) },
  \\
  \widetilde{\boldsymbol{P}}_H^h(\boldsymbol{\theta})
  &=&\del[1]{ \widetilde{P}_H^h(\boldsymbol{\theta}^{(0,0)}); \widetilde{P}_H^h(\boldsymbol{\theta}^{(1,0)});
      \widetilde{P}_H^h(\boldsymbol{\theta}^{(0,1)});\widetilde{P}_H^h(\boldsymbol{\theta}^{(1,1)}) },
\end{eqnarray*}
in which $\text{diag}\cbr[0]{A_1,A_2,A_3,A_4}$ refers to a block
diagonal matrix with diagonal blocks $A_1$, $A_2$, $A_3$, and
$A_4$. Furthermore, $\widetilde{K}_h$ is the symbol of the operator
$K_h$ as discussed in \cref{sub:invariant-space} and $\widetilde{S}_h$
is the symbol of the additive Vanka-type smoother (we refer to
\cite[Sections 3.5 and 3.6]{farrell2019local} on how to compute this
symbol). We refer to \cite[Section 5.2]{hetwo}, \cite[Section
3.4]{farrell2019local}, and \cite{Maclachlan:2011} on how to
compute $\widetilde{\boldsymbol{R}}_h^H$, the symbol of $R_h^H$,
taking into account the different groups of DOFs. The symbol of
$P_H^h$, $\widetilde{\boldsymbol{P}}_H^h$ follows from the symbol of
$R_h^H$ since $R_h^H = (P_H^h)^T$. Similarly, the symbol of the
coarse-grid operator, $\widetilde{K}_H$, follows from the symbols of
the grid-transfer operators and the fine-grid operator.

Given the symbol of $E_h$ \cref{eq:symbolEh}, the two-grid LFA
asymptotic convergence factor, $\rho_{{\rm asp}}$, is defined as
\cite{Trottenberg:book, Wienands:2005} 
\begin{equation}
  \label{real-TGM}
  \rho_{\text{asp}} =
  \sup_{ \boldsymbol{\theta} \in T^{\text{low}}}
  \cbr[1]{ \rho \del[0]{ \widetilde{\boldsymbol{E}}_h(\boldsymbol{\theta}, \omega) } },
\end{equation}
where $\rho(\widetilde{\boldsymbol{E}}_h(\boldsymbol{\theta},\omega))$
is the spectral radius of the matrix
$\widetilde{\boldsymbol{E}}_h$. In \cref{sec:numer} we will
approximate $\rho_{\text{asp}}$ by sampling over a finite set of
frequencies. In general, this approximation provides a sharp
prediction of the two-grid performance.

\section{Numerical Results}
\label{sec:numer}

\begin{figure}[tbp]
  \centering
  \subfloat[HDG, vertex-wise Vanka. \label{fig:HDG-Vertex-CS}]
  {\includegraphics[width=0.4\textwidth]{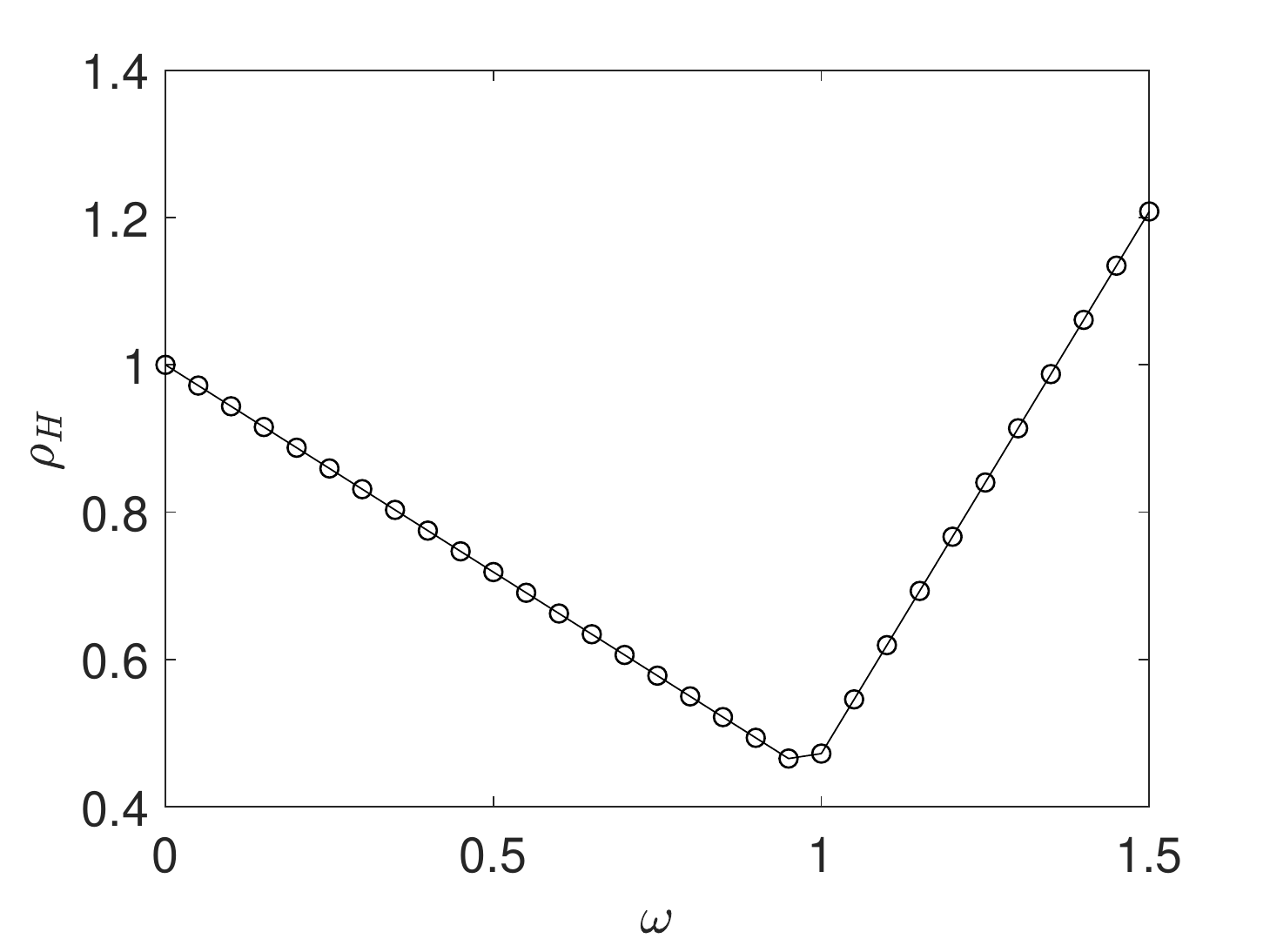}}
  \subfloat[HDG, element-wise Vanka. \label{fig:HDG-Element-CS}]
  {\includegraphics[width=0.4\textwidth]{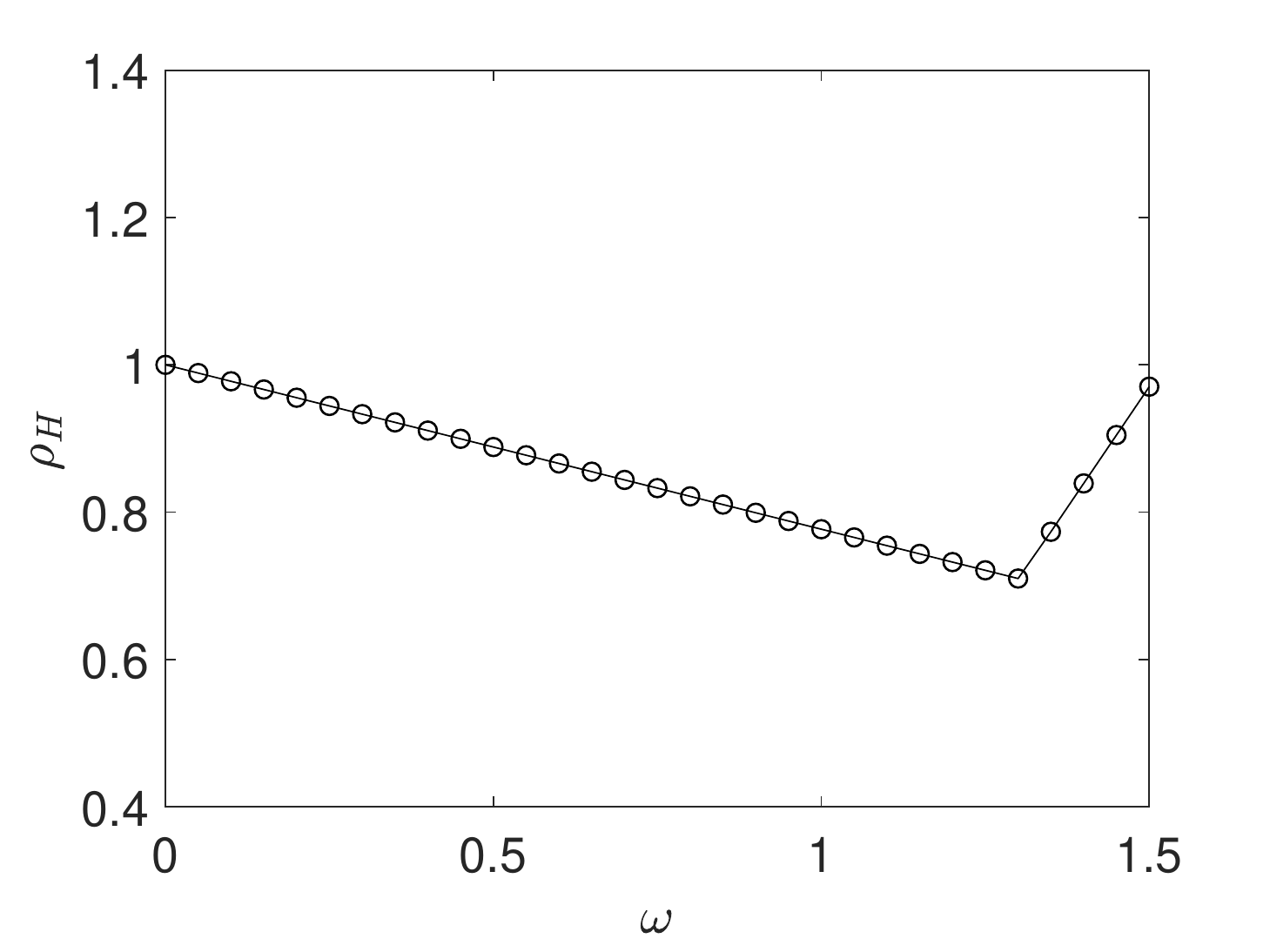}}
  \\
  \subfloat[EDG, vertex-wise Vanka. \label{fig:EDG-Vertex-CS}]
  {\includegraphics[width=0.4\textwidth]{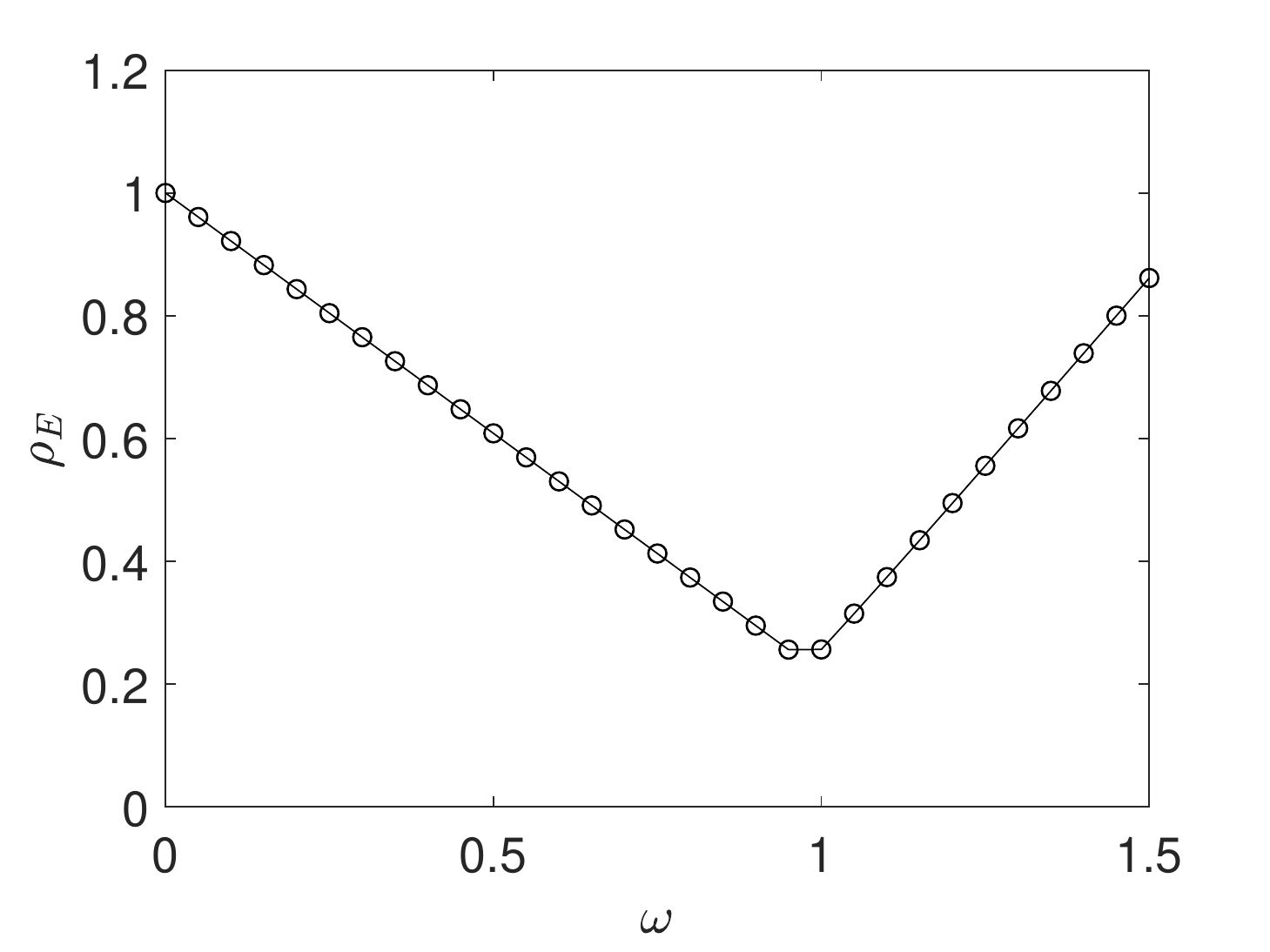}}
  \subfloat[EDG, element-wise Vanka. \label{fig:EDG-Element-CS}]
  {\includegraphics[width=0.4\textwidth]{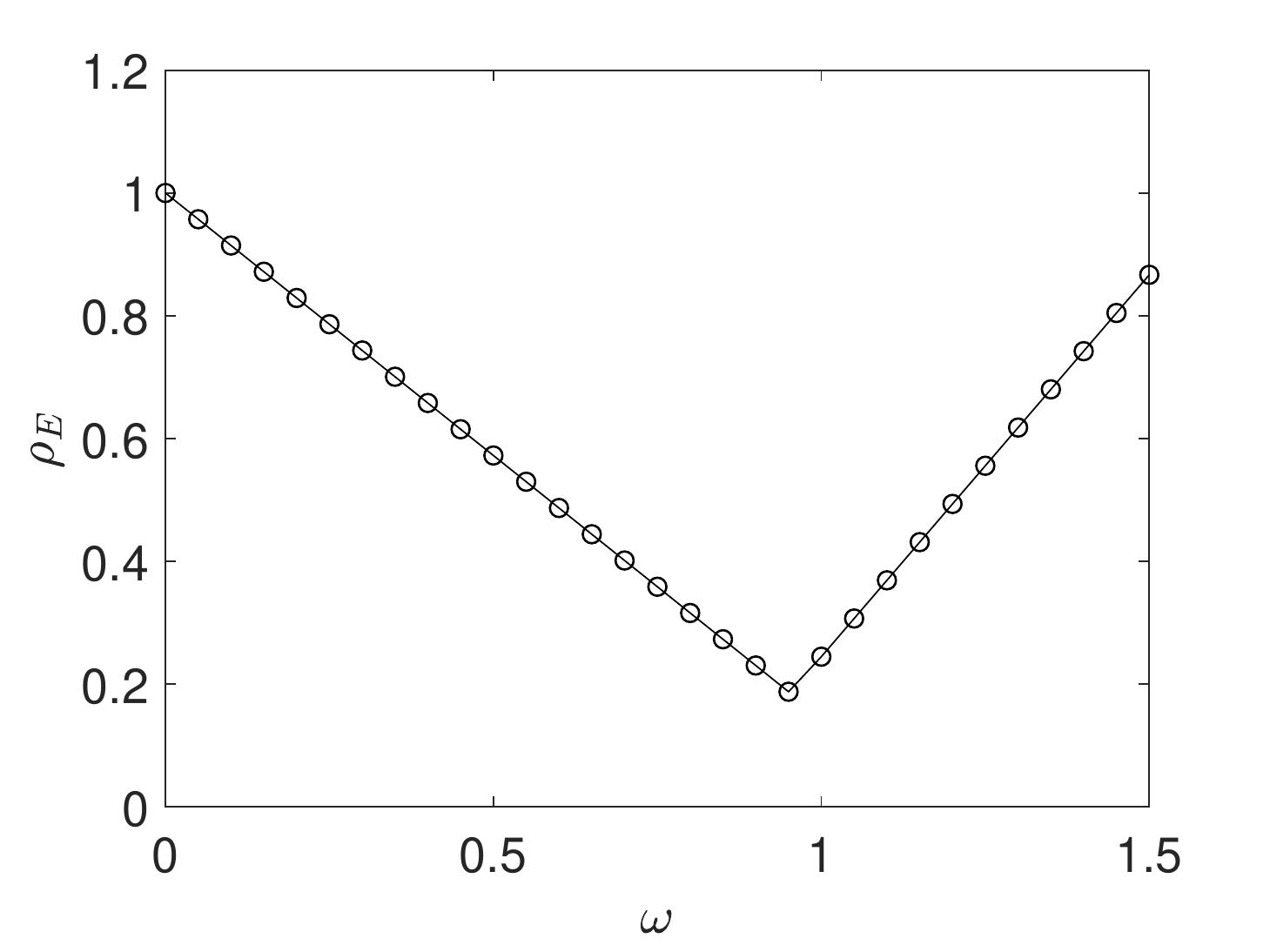}}
  \caption{Variation of two-grid LFA convergence factor as a function
    of $\omega$. Left column: the two-grid method using vertex-wise
    additive Vanka relaxation schemes. Right column: the two-grid
    method using element-wise additive Vanka relaxation schemes. The
    two-grid method is applied to HDG and EDG discretizations of the
    Laplacian with $k=2$.}
  \label{fig:EDG-HDG-k2-Element-CS-vs-parameter}
\end{figure}

We will now use the LFA from \cref{sec:LFA-tg} to compare the
efficiency of geometric multigrid (see \cref{sec:two-grid}) for
solving linear systems resulting from HDG, EDG, and CG discretizations
of the Poisson problem. We furthermore compare the performance of the
additive Vanka smoothers introduced in \cref{subsec:relaxation-scheme}
to the classical relaxation iterations of weighted pointwise Jacobi
and Gauss--Seidel. For a fair comparison we use the ordering of the
DOFs defined by the Vanka patches for all smoothers. In particular,
the DOFs are ordered with $N$-type DOFs first, followed by $X$-type
DOFs, $Y$-type DOFs, and $C$-type DOFs. Lexicographical ordering is
used for each group of DOFs. In all our simulations we consider meshes
with square elements and linear, quadratic, and cubic ($k=1,2,3$)
polynomial approximations. In the HDG and EDG discretization
\cref{eq:edghdg} we furthermore set the penalty parameter to
$\alpha = 6 k^2$ \cite{Riviere:book}.

\subsection{LFA prediction of multigrid efficiency}
\label{ss:lfaprediction}

\begin{table}[tbp]
  \footnotesize
  \caption{Two-grid LFA predictions with $\nu_1=1$ and $\nu_2=0$ of
    the spectral radii for CG, HDG, and EDG discretizations of the
    Laplacian. The number in brackets is the optimal value of $\omega$
    that minimizes the spectral radius. Here $\rho_C$, $\rho_E$,
    $\rho_H$ are the spectral radii of the two-grid method applied to
    a CG, EDG, and HDG discretization, respectively. We consider
    vertex-wise (VW), Lower-Triangular-Vertex-Wise (LTVW),
    element-wise (EW), and Lower-Triangular-Element-Wise (LTEW)
    additive Vanka type relaxation, pointwise Jacobi (JAC) relaxation
    and Gauss--Seidel (GS) relaxation. For each row, the best
    convergence factor among the smoothers that can be executed in
    parallel (i.e., excluding GS) is indicated in bold.}
  \centering
  \begin{tabular}{l|l|l|l|l|l||l}
    \hline
    $\rho$     &VW  & EW    & JAC    &LTVW   & LTEW   & GS \\
    \hline
    \multicolumn{7} {c} {\footnotesize{$k=1$}} \\
    \hline
    $\rho_C(\rho_E)$        &0.333(0.89)      &{\bf 0.200}(0.90)               &0.333(0.89)    &0.333(0.89) &0.282(0.90)  &0.261(1.02)\\
    $\rho_H$                &{\bf 0.403}(0.96)      &0.466(1.14)               &0.801(0.76)    &0.609(1.12)  &0.604(1.18)  &0.394(1.30)\\
    \hline
    \multicolumn{7} {c} {\footnotesize{$k=2$}} \\
    \hline
    $\rho_C$                 &0.208(1.00)         &0.282(0.84)          &0.452(1.00)          &0.362(1.02)  &{\bf 0.188}(1.02) &0.167(1.06)\\
    $\rho_E$                 &0.233(0.98)         &{\bf 0.194}(0.96)          &0.537(1.02)          &0.423(1.08)  &0.325(1.10) &0.246(1.10) \\
    $\rho_H$                 &{\bf 0.449}(0.98)         &0.710(1.30)           &0.893(0.82)         &0.802(1.18)  &0.799(1.20) &0.628(1.50) \\
    \hline
    \multicolumn{7} {c} {\footnotesize{$k=3$}} \\
    \hline
    $\rho_C$                 &0.233(0.96)        &{\bf 0.203}(0.94)        &0.654(0.78)      &0.368(1.08)   &0.301(1.00)  &0.233(1.10)\\
    $\rho_E$                 &{\bf 0.287}(0.94)        &0.332(1.10)        &0.792(0.90)      &0.598(1.20)   &0.576(1.18)  &0.470(1.30)\\
    $\rho_H$                &{\bf  0.476}(0.98)        &0.794(1.32)         &0.932(0.78)      &0.862(1.22)   &0.862(1.22)  &0.745(1.50)\\
    \hline
  \end{tabular}
  \label{tab:Vanka-single-two-grid-LFA}
\end{table}

\begin{table}[tbp]
  \small
  \caption{Two-grid LFA predictions with different pre- and post
    relaxation sweeps of the spectral radii for CG, HDG, and EDG
    discretizations of the Laplacian. Here $\rho_C$, $\rho_E$,
    $\rho_H$ are the spectral radii of the two-grid method applied to
    a CG, EDG, and HDG discretization, respectively. Furthermore
    $\text{TG}(\nu_1, \nu_2)$ denotes the two-grid method using
    $\nu_1$ and $\nu_2$ pre- and post relaxation sweeps.}
  \centering
  \begin{tabular}{l|l|l|l|l}
    \hline
    $k$ & $\rho$ & TG(1,1) & TG(1,2) & TG(2,2) \\
    \hline
    \multicolumn{5} {c} {} \\
    \multicolumn{5} {c} {Gauss--Seidel relaxation} \\
    \multicolumn{5} {c} {} \\
    \hline
    \hline
    1 & $\rho_C(\rho_E)$  &0.079         &0.069         &0.034   \\
      & $\rho_H$          &0.238         &0.108         &0.058   \\
    \hline
    2 & $\rho_C$          &0.071         &0.042         &0.028   \\
      & $\rho_E$          &0.083         &0.047         &0.025   \\
      & $\rho_H$          &0.387        &0.241         &0.157     \\
    \hline
    3 & $\rho_C$          &0.095         &0.044         &0.023   \\
      & $\rho_E$          &0.221         &0.115         &0.078   \\
      & $\rho_H$          &0.555         &0.410         &0.306   \\
    \hline
    \hline
    \multicolumn{5} {c} {} \\
    \multicolumn{5} {c} {Vertex-Wise Vanka relaxation} \\
    \multicolumn{5} {c} {} \\
    \hline
    \hline
    1 & $\rho_C(\rho_E)$ &0.112        &0.078       &0.061  \\
      & $\rho_H$         &0.250        &0.149       &0.093   \\
    \hline
    2 & $\rho_C$         &0.064                &0.022               &0.012   \\
      & $\rho_E$         &0.096                &0.042               &0.034   \\
      & $\rho_H$         &0.225                &0.105               &0.071     \\
    \hline
    3 & $\rho_C$         &0.068                &0.023              &0.014  \\
      & $\rho_E$         &0.122                &0.057              &0.042   \\
      & $\rho_H$         &0.227                &0.114              &0.073   \\
    \hline
    \hline
    \multicolumn{5} {c} {} \\
    \multicolumn{5} {c} {Element-Wise Vanka relaxation} \\
    \multicolumn{5} {c} {} \\
    \hline
    \hline
    1 & $\rho_C(\rho_E)$ &0.090       &0.048    & 0.040  \\
      & $\rho_H$         &0.342       &0.193    &0.138   \\
    \hline
    2 & $\rho_C$ &0.079        &0.022       &0.009 \\
      & $\rho_E$ &0.094        &0.045       &0.032   \\
      & $\rho_H$ &0.518        &0.360       &0.274     \\
    \hline
    3 & $\rho_C$  &0.104       &0.032    &0.017 \\
      & $\rho_E$  &0.165       &0.063    &0.043   \\
      & $\rho_H$  &0.631       &0.501    &0.398   \\
    \hline
    \hline
    \multicolumn{5} {c} {} \\
    \multicolumn{5} {c} {Lower-Triangular-Element-Wise Vanka relaxation} \\
    \multicolumn{5} {c} {} \\
    \hline
    \hline
    1 & $\rho_C(\rho_E)$  &0.098         &0.070         &0.052   \\
      & $\rho_H$          &0.456         &0.284         &0.226   \\
    \hline
    2 & $\rho_C$          &0.065         &0.043         &0.030   \\
      & $\rho_E$          &0.180         &0.076         &0.050   \\
      & $\rho_H$          &0.672         &0.529         &0.456     \\
    \hline
    3 & $\rho_C$          &0.117         &0.057         &0.032   \\
      & $\rho_E$          &0.327         &0.185         &0.119   \\
      & $\rho_H$          &0.743         &0.640         &0.551   \\
    \hline
    \hline
  \end{tabular}
  \label{tab:testing_sweeps}
\end{table}

The additive Vanka \cref{eq:vanka-omega}, weighted pointwise Jacobi
and Gauss--Seidel relaxation schemes include a tunable parameter
$\omega$. By a brute-force approach we first determine the optimal
value of $\omega$ that minimizes the two-grid convergence factor
$\rho_{\text{asp}}$ (see \cref{real-TGM}) for each discretization and
for each polynomial degree. For this we use $32 \times 32$ evenly
distributed Fourier frequencies $\boldsymbol{\theta}$ in the Fourier
domain $[-\frac{\pi}{2}+\epsilon, \frac{\pi}{2}-\epsilon]^2$ with
$\epsilon =\pi/64$.

We first consider the case where we only use one pre-relaxation sweep
and no post-relaxation sweeps in the two-grid method, i.e., $\nu_1=1$
and $\nu_2 = 0$. In \cref{fig:EDG-HDG-k2-Element-CS-vs-parameter} we
first plot the sensitivity of the two-grid LFA convergence factor with
element- and vertex-wise additive Vanka relaxation schemes for the EDG
and HDG discretization methods when $k=2$. These plots indicate the
importance of correctly choosing $\omega$.

Next we present in \cref{tab:Vanka-single-two-grid-LFA} the computed
optimal value of $\omega$ and the corresponding two-grid LFA
convergence factor for the different discretization and relaxation
methods. For the continuous Galerkin method with $k=1$ and a pointwise
Jacobi relaxation method (which for $CG$ with $k=1$ is the same as a
vertex-wise Vanka patch) it was proven that $\omega = 0.89$ is the
optimal damping parameter \cite[Lemma 4.1]{He:2019}. We therefore take
$\omega = 0.89$ for this particular case and compute only the
corresponding two-grid LFA convergence factor.

As expected, for $k=1$, the spectral radius of the CG and EDG method
are equal \cite{Cockburn:2009b}. For $k=2$ and $k=3$ the LFA predicts
that the performance of the two-grid method for EDG and CG are
similar. We also note from \cref{tab:Vanka-single-two-grid-LFA} that
the two-grid methods for EDG and CG outperform the two-grid method
applied to an HDG discretization, no matter which relaxation scheme is
used.

There is little difference between using vertex-wise and element-wise
Vanka patches and Gauss--Seidel for the CG and EDG
discretizations. For the HDG discretization the vertex-wise Vanka
patch outperforms the element-wise Vanka patch. For all discretization
methods LFA predicts that both the vertex-wise and
element-wise Vanka patch approaches 
lead to smaller convergence factors than pointwise Jacobi.

We furthermore observe that although using a lower-triangular
approximation to $K_i$ in the additive Vanka relaxation scheme does
not affect the two-grid LFA convergence factor too much for the CG and
EDG discretization, the performance of the two-grid method for HDG
significantly deteriorates with this approximation. We finally remark
that, especially for the pointwise Jacobi smoother, the performance of
the two-grid method slowly deteriorates with increasing $k$.

We next consider the performance of the two-grid method using
different pre- and post relaxation sweeps. For $\omega$ we use the
values from \cref{tab:Vanka-single-two-grid-LFA}. In
\cref{tab:testing_sweeps} we present the results predicted by LFA for
the two-grid method using vertex-wise, element-wise and
Lower-Triangular-Element-Wise additive Vanka type relaxation. We do
not present the results here for pointwise Jacobi and
Lower-Triangular-Vertex-Wise Vanka type relaxation because, for the
same pre- and post relaxation sweeps, these are always worse than the
previously mentioned relaxation schemes.

For all discretization and relaxation methods we observe that the
convergence factor of the two-grid method is reduced when the number
of pre- and post-relaxation sweeps are increased, however, this
improvement is most notable for the CG and EDG discretizations. We
furthermore observe that the performance of the two-grid method is
always better when applied to the CG and EDG discretizations than when
applied to the HDG discretization. We remark also that the two-grid
method with Lower-Triangular-Element-Wise Vanka relaxation performs
almost as well as the two-grid method with Element-Wise Vanka
relaxation for the CG and EDG discretizations. This is particularly
interesting for larger $k$ since computing the inverse of the
lower-triangular approximation to $K_i$ in \cref{eq:vanka-block} is
significantly cheaper than computing the inverse of $K_i$. Finally, we
remark that always
$(\rho(TG(1,0)))^{\nu_1 + \nu_2} < \rho(TG(\nu_1, \nu_2))$. This
implies that it is cheaper to apply $\nu_1 + \nu_2$ steps of the
two-grid method with one pre-relaxation sweep and no post-relaxation
sweeps than to apply the two-grid method with $\nu_1$ pre-relaxation
sweeps and $\nu_2$ post-relaxation sweeps.

\subsection{Measured multigrid convergence factors}
\label{sec:measured-MG}

\begin{table}[tbp]
  \footnotesize
  \caption{Measured multigrid convergence factors. Notation:
    $\rho_{m, \alpha, \beta}$ is the measured asymptotic convergence
    factor of a two-grid ($\beta=TG$) or $V$-cyle multigrid
    ($\beta=MG$) method applied to a continuous Galerkin ($\alpha=C$),
    HDG ($\alpha=H$) or EDG ($\alpha=E$) discretization. The *
      indicates a case where the measured convergence factor according
      to \cref{eq:def_rhom} with tolerance $10^{-16}$ is not
      representative, since the ratio in \cref{eq:def_rhom} oscillates
      between consecutive iterates. In brackets we denote the value
      computed by $\rho_{m} =(\|{d^{(j)}_h\|}/{\|d^{(0)}_h}\|)^{1/j}$
      which is approximately constant with iteration number
      asymptotically. For each row, the best convergence factor
      among the smoothers that can be executed in parallel (i.e.,
      excluding GS) is indicated in bold.}
    \centering
  \begin{tabular}{l|l|l|l|l|l||l}
    \hline
    $\rho$     &VW  & EW    & JAC    &LTVW   & LTEW   & GS \\
    \hline
    \multicolumn{7} {c} {\footnotesize{$k=1$}} \\
    \hline
    $\rho_{m,C,TG}(\rho_{m,E,TG})$ &0.332    & {\bf 0.197}         & 0.332          &0.332     &0.252             &0.242 \\
    $\rho_{m,C,MG}(\rho_{m,E,MG})$ &0.332    & {\bf 0.196}         & 0.332          &0.332     &0.255             &0.177*(0.261) \\
    \hline
    $\rho_{m,H,TG}$ &{\bf  0.396} &0.461 &0.799 &0.604 &0.698 & 0.382 \\
    $\rho_{m,H,MG}$ & {\bf 0.452} &0.606 &0.800 &0.680 &0.694 & 0.587 \\
    \hline
    \multicolumn{7} {c} {\footnotesize{$k=2$}} \\
    \hline
    $\rho_{m,C,TG}$ &{\bf 0.200} & 0.276 & 0.451 &0.357 &0.212 &0.159\\
    $\rho_{m,C,MG}$ &0.211 & 0.276 & 0.450 &0.357 &{\bf 0.197} &0.162 \\
    \hline
    $\rho_{m,E,TG}$ &0.231 & {\bf 0.192} &0.531 &0.418 &0.316 & 0.241 \\
    $\rho_{m,E,MG}$ &{\bf 0.227} & 0.269 &0.530 &0.418 &0.319 & 0.290 \\
    \hline
    $\rho_{m,H,TG}$ &{\bf  0.433} &0.707 & 0.890 &0.800 &0.794 & 0.616 \\
    $\rho_{m,H,MG}$ & {\bf 0.436} &0.707 & 0.890 &0.800 &0.792 & 0.626 \\
    \hline
    \multicolumn{7} {c} {\footnotesize{$k=3$}} \\
    \hline
    $\rho_{m,C,TG}$ &0.229 &{\bf 0.198}  &0.646 &0.354 &0.296  &0.230 \\
    $\rho_{m,C,MG}$ &0.225 &{\bf0.206}  &0.653 &0.359 &0.314  &0.254 \\
    \hline
    $\rho_{m,E,TG}$ &{\bf 0.281} & 0.324 &0.790 &0.585 &0.570 & 0.446 \\
    $\rho_{m,E,MG}$ &{\bf 0.274} & 0.324 &0.789 &0.608 &0.591 & 0.466 \\
    \hline
    $\rho_{m,H,TG}$ &{\bf 0.472} &0.791 &0.931 &0.860 &0.860 & 0.740 \\
    $\rho_{m,H,MG}$ &{\bf0.472} &0.790 &0.931 &0.859 &0.859 & 0.747 \\
    \hline
  \end{tabular}
  \label{tab:measureasympconv}
\end{table}

\begin{table}[tbp]
  \footnotesize
  \caption{Measured asymptotic convergence factors using different
    grid sizes. Notation: $\rho_{m, \alpha, \beta}$ is the measured
    asymptotic convergence factor of a two-grid ($\beta=TG$) or
    $V$-cyle multigrid ($\beta=MG$) method applied to a continuous
    Galerkin ($\alpha=C$), HDG ($\alpha=H$) or EDG ($\alpha=E$)
    discretization. In all cases we use element-wise Vanka relaxation
    for CG and EDG and vertex-wise relaxation for HDG.}
  \centering
  \begin{tabular}{l||l|l||l|l||l|l}
    \hline
    Mesh     & $\rho_{m,C,TG}$ & $\rho_{m,C,MG}$ & $\rho_{m,E,TG}$ & $\rho_{m,E,MG}$ & $\rho_{m,H,TG}$ & $\rho_{m,H,MG}$ \\
    \hline
    \multicolumn{7} {c} {\footnotesize{$k=1$}} \\
    \hline
    $32^2$   &0.194     &0.195               &0.194         &0.195                  &0.396 &0.429 \\
    $64^2$   &0.197     &0.196               &0.197         &0.196                 &0.396 &0.452 \\
    $128^2$  &0.196     &0.196               &0.196         &0.196                 &0.396 &0.471 \\
    $256^2$  &0.197     &0.197               &0.197         &0.197                &0.395 &0.477 \\
    \hline
    \multicolumn{7} {c} {\footnotesize{$k=2$}} \\
    \hline
    $32^2$ &0.276      &0.277  &0.194 &0.255 &0.432 &0.436 \\
    $64^2$ &0.276      &0.276  &0.192 &0.269 &0.433 &0.436 \\
    $128^2$ &0.277    &0.277   &0.191 &0.270 &0.434 &0.437 \\
    $256^2$ &0.278    &0.278   &0.191 &0.270 &0.434 &0.437 \\

    \hline
    \multicolumn{7} {c} {\footnotesize{$k=3$}} \\
    \hline
    $32^2$ &0.198 &0.207 &0.324 &0.323 &0.471 &0.470 \\
    $64^2$ &0.198 &0.206 &0.324 &0.324 &0.472 &0.472 \\
    $128^2$ &0.199 &0.206 &0.324 &0.324 &0.472 &0.472 \\
    $256^2$ &0.206 &0.206 &0.324 &0.324 &0.472 &0.472 \\
    \hline
  \end{tabular}
  \label{tab:hindep}
\end{table}

In this section we verify the LFA predictions from
\cref{ss:lfaprediction}. For this we consider the Laplace problem on
the unit square domain with homogeneous Dirichlet boundary
conditions. We will solve HDG, EDG, and CG discretizations of this
problem using geometric multigrid starting with a random initial guess
with each component taken from a uniform distribution on $[0,100]$. We
will measure the asymptotic convergence behavior of the proposed
two-grid method as well as its five-level $V$-cyle multigrid
variant. For $\omega$ we use the values from
\cref{tab:Vanka-single-two-grid-LFA}. The finest mesh in our
calculations consists of $64^2 = 4096$ elements and we use an
LU-decomposition to solve the problem on the coarsest grid.

The measured asymptotic convergence factor is given by
\begin{equation}
  \label{eq:def_rhom}
  \rho_m =\frac{\norm[0]{d^{(j)}_h}_2}{\norm[0]{d_h^{(j-1)}}_2},
\end{equation}
where $d_h^{(j)}= f_h - K_h \boldsymbol{\bar{u}}^{(j)}_h$,
$\boldsymbol{\bar{u}}^{(j)}_h$ is the approximation to the solution of
\cref{eq:matsystemred} at the $j^{\text{th}}$ multigrid iteration, and
$j$ is the smallest integer such that
$\norm[0]{d_h^{(j)}}_2 < 10^{-16}$.

\begin{remark}
  \label{measurement-two-options}
  Other approaches to measure the asymptotic convergence factor are to
  define $\rho_m$ as
  $\rho_{m} =(\norm[0]{d^{(j)}_h}/\norm[0]{d^{(0)}_h})^{1/j}$ or
  $\rho_{m} = (\norm[0]{\boldsymbol{\bar{u}}_h^{(j)}} /
  \norm[0]{\boldsymbol{\bar{u}}_h^{(0)}})^{1/j}$. For our problem, the
  results obtained using these measures are almost the same as when
  using \cref{eq:def_rhom}.
\end{remark}

In \cref{tab:measureasympconv} we present the measured asymptotic
convergence factors for the case where we use one pre-relaxation sweep
and no post-relaxation sweeps, i.e., $\nu_1 = 1$ and $\nu_2 = 0$. We
observe that the measured asymptotic convergence factors match very
well with the two-grid LFA convergence factors in
\cref{tab:Vanka-single-two-grid-LFA}, verifying our analysis. In
particular, these results confirm that performance of geometric
multigrid applied to an EDG discretization is similarly effective in
terms of convergence factors and scalability as for geometric
multigrid applied to a continuous Galerkin discretization, and that
this performance is always substantially better than when applied to
an HDG discretization. The results in \cref{tab:measureasympconv} show
that, among the smoothers that can be executed in parallel, the
Vanka-type smoothers may require substantially less multigrid
iterations than the Jacobi smoother, especially as the order of the
discretization increases.

We remark that we also measured the asymptotic convergence factor for
$W$- and $F$-cycle multigrid, but compared to $V$-cycle multigrid
there was only very minor improvement. We furthermore remark that the
conclusions for the $(\nu_1, \nu_2) = (1,0)$ case hold also for other
$(\nu_1, \nu_2)$ combinations. We therefore do not present these
results here.

Finally, in \cref{tab:hindep} we show that the measured asymptotic
convergence factors of the two-grid and $V$-cyle multigrid methods
(with $(\nu_1, \nu_2) = (1,0)$) are independent of the mesh size
$h$. We present the results only for the case where we use
element-wise Vanka relaxation for CG and EDG, and vertex-wise Vanka
relaxation for HDG. We see that the measured two-grid convergence
factors match well with the two-grid LFA predictions from
\cref{tab:Vanka-single-two-grid-LFA}. The measured convergence factors
of $V$-cycle multigrid methods for CG are the same as the two-grid LFA
predictions.  For EDG with $k=2$ and HDG with $k=1$, the measured
$V$-cycle convergence factors are a little worse than the measured
two-grid convergence factors. This is as expected, since in a
$V$-cycle, we use an inexact solver (multigrid) for the coarse
problem, instead of the exact coarse solver of the two-grid method.
All in all, the two-grid and $V$-cycle multigrid methods offer
efficient performance. We remark that for $k=3$, the values
$\rho_{m,\alpha,TG}$ are measured using a three-level $V$-cycle, since
in the two-grid method the $LU$ decomposition for the coarse problem
runs out of memory on the computer we used.

\section{Conclusions}
\label{sec:conclu}

We presented a geometric multigrid method with Jacobi and additive
Vanka relaxation for continuous Galerkin, EDG and HDG discretizations
of the Poisson problem. We used local Fourier analysis to predict the
efficiency of the multigrid method and confirmed what we have observed
in previous work \cite{Rhebergen:2020}, namely that (algebraic)
multigrid performance applied to an EDG discretization outperforms
multigrid applied to an HDG discretization. We therefore conclude that
although EDG does not have the super-convergence properties of an HDG
discretization \cite{Cockburn:2009b}, EDG methods with suitable
iterative methods are competitive alternatives to HDG discretizations.
More generally, our work represents the first analysis of geometric
multigrid convergence for high-order EDG and HDG discretizations in
two dimensions, demonstrating fast convergence, in particular for EDG.

\appendix
\section{Stencil example for HDG with $k=1$}
\label{ap:example_hdg_k1}

We present here an example of the stencils \cref{defi-symbol-operator}
for the HDG method with $k=1$. Using
\cref{eq:block-matrix-to-operator} (or \cref{eq:secd-partition}) and
\cref{defi-symbol-operator} we can write
\begin{equation}
  \label{eq:block-stencil-HDGk1}
  K_h=
  \begin{bmatrix}
[s_{\boldsymbol{\kappa}}]_{X_1X_1} &[s_{\boldsymbol{\kappa}}]_{X_1X_2}     &[s_{\boldsymbol{\kappa}}]_{X_1Y_1} &[s_{\boldsymbol{\kappa}}]_{X_1Y_2} \\
[s_{\boldsymbol{\kappa}}]_{X_2X_1} &[s_{\boldsymbol{\kappa}}]_{X_2X_2}     &[s_{\boldsymbol{\kappa}}]_{X_2Y_1} &[s_{\boldsymbol{\kappa}}]_{X_2Y_2} \\
[s_{\boldsymbol{\kappa}}]_{Y_1X_1} &[s_{\boldsymbol{\kappa}}]_{Y_1X_2}     &[s_{\boldsymbol{\kappa}}]_{Y_1Y_1} &[s_{\boldsymbol{\kappa}}]_{Y_1Y_2} \\
[s_{\boldsymbol{\kappa}}]_{Y_2X_1} &[s_{\boldsymbol{\kappa}}]_{Y_2X_2}     &[s_{\boldsymbol{\kappa}}]_{Y_2Y_1} &[s_{\boldsymbol{\kappa}}]_{Y_2Y_2} \\
 \end{bmatrix}.
\end{equation}
Since the DOFs in HDG are only of type $X$ and $Y$ the different
stencils in \cref{eq:block-stencil-HDGk1} are only of type 1 and 4
(see \cref{ss:partitioningdiscop} for a definition of the four
different stencil types). Using $\alpha = 6k^2$ these stencils are
given by
\begin{align*}
  [s_{\boldsymbol{\kappa}}]_{X_1X_1}
  &=
    \begin{bmatrix}
      -1/24\\
      9/4 \odot\\
      -1/24
    \end{bmatrix},
  &
    [s_{\boldsymbol{\kappa}}]_{X_1X_2}
  &=
    \begin{bmatrix}
      -1/24\\
      11/12\odot\\
      -1/24
    \end{bmatrix},
  \\
  [s_{\boldsymbol{\kappa}}]_{X_1Y_1}
  &=
    \begin{bmatrix}
      -19/24 &       & -7/24\\
      & \odot &   \\
      -7/24  &       &  -1/8
    \end{bmatrix},
  &
    [s_{\boldsymbol{\kappa}}]_{X_1Y_2}
  &=
    \begin{bmatrix}
      -7/24 &         & -1/8\\
      &  \odot  & \\
      -19/24  &       & -7/24
    \end{bmatrix},
\end{align*}
\begin{align*}
  [s_{\boldsymbol{\kappa}}]_{X_2X_1}
  &=
    \begin{bmatrix}
      -1/24\\
      11/12\odot\\
      -1/24
    \end{bmatrix},
  &
    [s_{\boldsymbol{\kappa}}]_{X_2X_2}
  &=
    \begin{bmatrix}
      -1/24\\
      9/4\odot\\
      -1/24
    \end{bmatrix},
\\
  [s_{\boldsymbol{\kappa}}]_{X_2Y_1}
  &=
    \begin{bmatrix}
      -7/24 &          & -19/24\\
      &  \odot   &   \\
      -1/8  &          &  -7/24
    \end{bmatrix},
  &
    [s_{\boldsymbol{\kappa}}]_{X_2Y_2}
  &=
    \begin{bmatrix}
      -1/8 &        & -7/24\\
      & \odot & \\
      -7/24&        & -19/24
    \end{bmatrix},
\end{align*}
\begin{align*}
  [s_{\boldsymbol{\kappa}}]_{Y_1X_1}
  &=
    \begin{bmatrix}
      -1/8 &         & -7/24\\
      & \odot   &   \\
      -7/24  &         &  -19/24
    \end{bmatrix},
  &
    [s_{\boldsymbol{\kappa}}]_{Y_1X_2}
  &=
    \begin{bmatrix}
      -7/24 &          & -1/8\\
      & \odot    & \\
      -19/24 &        & -7/24
    \end{bmatrix},
  \\
  [s_{\boldsymbol{\kappa}}]_{Y_1Y_1}
  &=
    \begin{bmatrix}
      -1/24 & 9/4\odot  & -1/24
    \end{bmatrix},
  &
    [s_{\boldsymbol{\kappa}}]_{Y_1Y_2}
             &=
               \begin{bmatrix}
                 -1/24 & 11/12\odot & -1/24
               \end{bmatrix},
\end{align*}
\begin{align*}
  [s_{\boldsymbol{\kappa}}]_{Y_2X_1}
  &=
    \begin{bmatrix}
      -7/24 &         & -19/24\\
      & \odot    &   \\
      -1/8  &           &  -7/24
    \end{bmatrix},
  &
    [s_{\boldsymbol{\kappa}}]_{Y_2X_2}
  &=
    \begin{bmatrix}
      -19/24 &          & -7/24\\
      & \odot     & \\
      -7/24 &           & -1/8
    \end{bmatrix},
  \\
  [s_{\boldsymbol{\kappa}}]_{Y_2Y_1}
  &=
    \begin{bmatrix}
      -1/24 & 9/4\odot   & -1/24
    \end{bmatrix},
  &
    [s_{\boldsymbol{\kappa}}]_{Y_2Y_2}
             &=
               \begin{bmatrix}
                 -1/24 & 11/12\odot & -1/24
               \end{bmatrix}.
\end{align*}

\bibliographystyle{siamplain}
\bibliography{references}

\begin{thebibliography}{10}

\bibitem{Adler:2017}
{\sc J.~H. Adler, T.~R. Benson, and S.~P. MacLachlan}, {\em Preconditioning a
  mass-conserving discontinuous {G}alerkin discretization of the {S}tokes
  equations}, Numer. Linear Algebra Appl., 24 (2017), pp.~e2047, 23,
  \url{https://doi.org/10.1002/nla.2047}.

\bibitem{Boonen2008curl}
{\sc T.~Boonen, J.~Van~lent, and S.~Vandewalle}, {\em Local {F}ourier analysis
  of multigrid for the curl-curl equation}, SIAM J. Sci. Comput., 30 (2008),
  pp.~1730--1755, \url{https://doi.org/10.1137/070679119}.

\bibitem{Brandt1977LFA}
{\sc A.~Brandt}, {\em Multi-level adaptive solutions to boundary-value
  problems}, Math. Comp., 31 (1977), pp.~333--390,
  \url{https://doi.org/10.2307/2006422}.

\bibitem{Chen:2014}
{\sc H.~Chen, P.~Lu, and X.~Xu}, {\em A robust multilevel method for
  hybridizable discontinuous {G}alerkin method for the {H}elmholtz equation},
  J. Comput. Phys., 264 (2014), pp.~133--151,
  \url{https://doi.org/10.1016/j.jcp.2014.01.042}.

\bibitem{Cockburn:2014}
{\sc B.~Cockburn, O.~Dubois, J.~Gopalakrishnan, and S.~Tan}, {\em Multigrid for
  an {HDG} method}, IMA J. Numer. Anal., 34 (2014), pp.~1386--1425,
  \url{https://doi.org/10.1093/imanum/drt024}.

\bibitem{Cockburn:2009}
{\sc B.~Cockburn, J.~Gopalakrishnan, and R.~Lazarov}, {\em Unified
  hybridization of discontinuous {G}alerkin, mixed, and continuous {G}alerkin
  methods for second order elliptic problems}, SIAM J. Numer. Anal., 47 (2009),
  pp.~1319--1365, \url{https://doi.org/10.1137/070706616}.

\bibitem{Cockburn:2009b}
{\sc B.~Cockburn, J.~Guzm\'an, S.~C. Soon, and H.~K. Stolarski}, {\em An
  analysis of the embedded discontinuous {G}alerkin method for second-order
  elliptic problems}, SIAM J. Numer. Anal., 47 (2009), pp.~2686--2707,
  \url{https://doi.org/10.1137/080726914}.

\bibitem{de2020convergence}
{\sc H.~De~Sterck, S.~Friedhoff, A.~J. Howse, and S.~P. MacLachlan}, {\em
  Convergence analysis for parallel-in-time solution of hyperbolic systems},
  Numer. Linear Algebra Appl., 27 (2020), p.~e2271,
  \url{https://doi.org/10.1002/nla.2271}.

\bibitem{Dobrev:2019}
{\sc V.~Dobrev, T.~Kolev, C.~S. Lee, V.~Tomov, and P.~S. Vassilevski}, {\em
  Algebraic hybridization and static condensation with application to scalable
  ${H}(\text{div})$ preconditioning}, SIAM J. Sci. Comput., 41 (2019),
  pp.~B425--B447, \url{https://doi.org/10.1137/17M1132562}.

\bibitem{Fabien:2019}
{\sc M.~S. Fabien, M.~G. Knepley, R.~T. Mills, and B.~M. Rivi\`ere}, {\em
  Manycore parallel computing for a hybridizable discontinuous {G}alerkin
  nested multigrid method}, SIAM J. Sci. Comput., 41 (2019), pp.~C73--C96,
  \url{https://doi.org/10.1137/17M1128903}.

\bibitem{farrell2019local}
{\sc P.~E. Farrell, Y.~He, and S.~P. MacLachlan}, {\em A local {F}ourier
  analysis of additive {V}anka relaxation for the {S}tokes equations}, Numer.
  Linear Algebra Appl.,  (2020).
\newblock To appear.

\bibitem{Fidkowski:2005}
{\sc K.~J. Fidkowski, T.~A. Oliver, J.~Lu, and D.~L. Darmofal}, {\em
  $p$-multigrid solution of high-order discontinuous {G}alerkin discretizations
  of the compressible {N}avier--{S}tokes equations}, J. Comput. Phys., 207
  (2005), pp.~92--113, \url{https://doi.org/10.1016/j.jcp.2005.01.005}.

\bibitem{Frommer:1999Vanka}
{\sc A.~Frommer and D.~B. Szyld}, {\em Weighted max norms, splittings, and
  overlapping additive {S}chwarz iterations}, Numer. Math., 83 (1999),
  pp.~259--278, \url{https://doi.org/10.1007/s002110050449}.

\bibitem{Gopalakrishnan:2003}
{\sc J.~Gopalakrishnan and G.~Kanschat}, {\em A multilevel discontinuous
  {G}alerkin method}, Numer. Math., 95 (2003), pp.~527--550,
  \url{https://doi.org/10.1007/s002110200392}.

\bibitem{Gopalakrishnan:2009}
{\sc J.~Gopalakrishnan and S.~Tan}, {\em A convergent multigrid cycle for the
  hybridized mixed method}, Numer. Linear Algebra Appl., 16 (2009),
  pp.~689--714, \url{https://doi.org/10.1002/nla.636}.

\bibitem{Guzey:2007}
{\sc S.~G\"uzey, B.~Cockburn, and H.~K. Stolarski}, {\em The embedded
  discontinuous {G}alerkin methods: {A}pplication to linear shells problems},
  Int. J. Numer. Methods Engrg., 70 (2007), pp.~757--790,
  \url{https://doi.org/10.1002/nme.1893}.

\bibitem{He:2019}
{\sc Y.~He and S.~P. MacLachlan}, {\em Local {F}ourier analysis for mixed
  finite-element methods for the {S}tokes equations}, J. Comput. Appl. Math.,
  357 (2019), pp.~161--183, \url{https://doi.org/10.1016/j.cam.2019.01.029}.

\bibitem{hetwo}
{\sc Y.~He and S.~P. MacLachlan}, {\em Two-level {F}ourier analysis of
  multigrid for higher-order finite-element discretizations of the
  {L}aplacian}, Numer. Linear Algebra Appl., 27 (2020), pp.~e2285, 26,
  \url{https://doi.org/10.1002/nla.2285}.

\bibitem{Hemker:2003}
{\sc P.~W. Hemker, W.~Hoffmann, and M.~H. {van Raalte}}, {\em Two-level
  {F}ourier analysis of a multigrid approach for discontinuous {G}alerkin
  discretization}, SIAM J. Sci. Comput., 25 (2003), pp.~1018--1041,
  \url{https://doi.org/10.1137/S1064827502405100}.

\bibitem{Hemker:2004}
{\sc P.~W. Hemker and M.~H. {van Raalte}}, {\em Fourier two-level analysis for
  higher dimensional discontinuous {G}alerkin discretisation}, Comput. Vis.
  Sci., 7 (2004), pp.~159--172,
  \url{https://doi.org/10.1007/s00791-004-0136-1}.

\bibitem{Kirby:2012}
{\sc R.~M. Kirby, S.~J. Sherwin, and B.~Cockburn}, {\em To {CG} or to {HDG}: a
  comparitive study}, J. Sci. Comput., 51 (2012), pp.~183--212,
  \url{https://doi.org/10.1007/s10915-011-9501-7}.

\bibitem{Kronbichler:2018}
{\sc M.~Kronbichler and W.~A. Wall}, {\em A performance comparison of
  continuous and discontinuous {G}alerkin methods with fast multigrid solvers},
  SIAM J. Sci. Comput., 40 (2018), pp.~A3423--A3448,
  \url{https://doi.org/10.1137/16M110455X}.

\bibitem{Maclachlan:2011}
{\sc S.~P. MacLachlan and C.~W. Oosterlee}, {\em Local {F}ourier analysis for
  multigrid with overlapping smoothers applied to systems of {PDE}s}, Numer.
  Linear Algebra Appl., 18 (2011), pp.~751--774,
  \url{https://doi.org/10.1002/nla.762}.

\bibitem{Rhebergen:2018b}
{\sc S.~Rhebergen and G.~N. Wells}, {\em Preconditioning of a hybridized
  discontinuous galerkin finite element method for the {S}tokes equations}, J.
  Sci. Comput., 77 (2018), pp.~1936--1952,
  \url{https://doi.org/10.1007/s10915-018-0760-4}.

\bibitem{Rhebergen:2020}
{\sc S.~Rhebergen and G.~N. Wells}, {\em An embedded-hybridized discontinuous
  {G}alerkin finite element method for the {S}tokes equations}, Comput. Methods
  Appl. Mech. Engrg., 358 (2020), pp.~112619, 1--18,
  \url{https://doi.org/10.1016/j.cma.2019.112619}.

\bibitem{Riviere:book}
{\sc B.~Rivi\`ere}, {\em Discontinuous {G}alerkin Methods for Solving Elliptic
  and Parabolic Equations}, vol.~35 of Frontiers in Applied Mathematics,
  Society for Industrial and Applied Mathematics, Philadelphia, 2008.

\bibitem{Rodrigo:2015}
{\sc C.~Rodrigo, F.~Sanz, F.~J. Gaspar, and F.~J. Lisbona}, {\em Local
  {F}ourier analysis for edge-based discretizations on triangular grids},
  Numer. Math. Theor. Meth. Appl., 8 (2015), pp.~78--96,
  \url{https://doi.org/10.4208/nmtma.2015.w07si}.

\bibitem{Saad2003book}
{\sc Y.~Saad}, {\em Iterative methods for sparse linear systems}, Society for
  Industrial and Applied Mathematics, Philadelphia, PA, second~ed., 2003,
  \url{https://doi.org/10.1137/1.9780898718003}.

\bibitem{Sampath2010FEM}
{\sc R.~S. Sampath and G.~Biros}, {\em A parallel geometric multigrid method
  for finite elements on octree meshes}, SIAM J. Sci. Comput., 32 (2010),
  pp.~1361--1392, \url{https://doi.org/10.1137/090747774}.

\bibitem{Trottenberg:book}
{\sc U.~Trottenberg, C.~W. Oosterlee, and A.~Sch\"{u}ller}, {\em Multigrid},
  Academic Press, Inc., San Diego, CA, 2001.

\bibitem{Vegt:2012a}
{\sc J.~J.~W. {van der Vegt} and S.~Rhebergen}, {\em hp-{M}ultigrid as smoother
  algorithm for higher order discontinuous {G}alerkin discretizations of
  advection dominated flows. {P}art {I}. {M}ultilevel analysis}, J. Comput.
  Phys., 231 (2012), pp.~7537--7563,
  \url{https://doi.org/10.1016/j.jcp.2012.05.038}.

\bibitem{Vegt:2012b}
{\sc J.~J.~W. {van der Vegt} and S.~Rhebergen}, {\em hp-{M}ultigrid as smoother
  algorithm for higher order discontinuous {G}alerkin discretizations of
  advection dominated flows. {P}art {II}. {O}ptimization of the
  {R}unge--{K}utta smoother}, J. Comput. Phys., 231 (2012), pp.~7563--7583,
  \url{https://doi.org/10.1016/j.jcp.2012.05.037}.

\bibitem{Raalte:2005}
{\sc M.~H. {van Raalte} and P.~W. Hemker}, {\em Two-level multigrid analysis
  for the convection-diffusion equation discretized by a discontinuous
  {G}alerkin method}, Numer. Linear Algebra Appl., 12 (2005), pp.~563--584,
  \url{https://doi.org/10.1002/nla.441}.

\bibitem{Wells:2011}
{\sc G.~N. Wells}, {\em Analysis of an interface stabilized finite element
  method: the advection-diffusion-reaction equation}, SIAM J. Numer. Anal., 49
  (2011), pp.~87--109, \url{https://doi.org/10.1137/090775464}.

\bibitem{Wienands:2005}
{\sc R.~Wienands and W.~Joppich}, {\em Practical {F}ourier analysis for
  multigrid methods}, vol.~4 of Numerical Insights, Chapman \& Hall/CRC, Boca
  Raton, FL, 2005.

\bibitem{Wildey:2019}
{\sc T.~Wildey, S.~Muralikrishnan, and T.~Bui-Thanh}, {\em Unified geometric
  multigrid algorithm for hybridized high-order finite element methods}, SIAM
  J. Sci. Comput., 41 (2019), pp.~S172--S195,
  \url{https://doi.org/10.1137/18M1193505}.

\bibitem{Yakovlev:2016}
{\sc S.~Yakovlev, D.~Moxey, R.~M. Kirby, and S.~J. Sherwin}, {\em To {CG} or to
  {HDG}: a comparitive study in 3{D}}, J. Sci. Comput., 67 (2016),
  pp.~192--220, \url{https://doi.org/10.1007/s10915-015-0076-6}.

\end{thebibliography}
\end{document}